\definecolor{darkred}{rgb}{1,0,0} 
\definecolor{darkgreen}{rgb}{0,0.8,0}
\definecolor{darkblue}{rgb}{0,0,1}
\numberwithin{equation}{section}
\newtheorem {Theorem}{Theorem}
\numberwithin{Theorem}{section}
\newtheorem {lemma}[Theorem]    {Lemma}
\newtheorem {proposition}[Theorem]{Proposition}
\theoremstyle{definition}
\newtheorem{Definition}[Theorem]{Definition}
\newtheorem{definition}[Theorem]{Definition}
\theoremstyle{remark}
\newtheorem{remark}[Theorem]{Remark}
\newtheorem{Example}[Theorem]{Example}
\newtheorem{example}[Theorem]{Example}
\newtheorem{notation}[Theorem]{Notation}
\newtheorem{fact}[Theorem]{Fact}
\newcommand{\cF}{{\mathcal F}}
\newcommand{\Fr}{{\mathcal Fr}}
\newcommand{\ve}{{\mathsf{v}}}
\newcommand{\Cc}{{\mathcal C}}
 \newcommand{\Dd}{{\mathcal D}}
 \newcommand{\Ll}{{\mathcal L}}
\newcommand{\Ff}{{\mathcal F}}
\newcommand{\Hh}{{\mathcal H}}
\newcommand{\ip}[2]{\langle #1, #2 \rangle}
\newcommand{\conj}[1]{\overline{#1}}
\newcommand{\inv}{^{-1}}
\DeclareMathOperator{\End}{End}
\DeclareMathOperator{\Hom}{Hom}
\newcommand{\R}{\mathbb{R}}
\newcommand{\C}{\mathbb{C}}
\newcommand{\Z}{\mathbb{Z}}
\newcommand{\LL}{\mathbb{L}}
\newcommand{\inp}[2]{\langle\langle #1, #2 \rangle\rangle}
\newcommand{\vectb}{\overset{\pi}\rightarrow}
\newcommand{\ipd}{\ip{\cdot}{\cdot}}
\newcommand{\Vect}{\mathsf{Vect}}
\newcommand{\Set} {\mathsf{Set}}
\newcommand{\Man} {\mathsf{Man}}
\newcommand{\CoChain} {\mathsf{CoChain}}
\newcommand{\A}{{\mathsf A}}
\newcommand{\B}{{\mathsf B}}
\newcommand{\M}{{\mathsf{Mat}}}
\newcommand{\Finset} {\mathsf{FinSet}}
\newcommand{\FinVect} {\mathsf{FinVect}}
\title{Geometric quantization; a crash course}
\author{Eugene Lerman}
\begin{document}

\setcounter{tocdepth}{1}

\maketitle

\noindent

\tableofcontents

Early in 2011 Sam Evens acting on behalf of the organizers of the
summer school on quantization at Notre Dame asked me to give a short
series of lectures on geometric quantization.  These lectures were
meant to prepare a group of graduate mathematics students for talks at
the conference on quantization which were to follow the summer school.
I was told to assume that the students had attended an introductory
course on manifolds.  But I was not to assume any prior knowledge of
symplectic geometry.  The notes that follow resulted from this
request.  They are a mostly faithful record of four one-hour lectures
(except lecture 4).  It should be said that there exist many books on
geometric quantization starting with Souriau \cite{Souriau}, Sniatycki
\cite{Sniatycki}, Simms and Woodhouse \cite{SimmsWoodhouse} ,
Guillemin and Sternberg \cite{GS_Assymptotics}, Wallach \cite{Wallach}
and Woodhouse \cite{Woodhouse} and continuing with Bates and Weinstein
\cite{BatesWeinstein} and Ginzburg, Guillemin and Karshon \cite{GGK}.
There are also a number of one hundred page surveys on geometric
quantization such as the ones by Ali and Englis \cite{AE} and by
Echeverria-Enriquez {\em et al.}  \cite{EMRV}. Clearly I could not
have squeezed a semester or more worth of mathematics into four
lectures.  Since I had to pick and choose, I decided to convey the
flavor of the subject by proceeding as follows.  In the first lecture
I tried to explain how to formulate the Newton's law of motion in
terms of symplectic geometry.  This naturally require an introduction
of the notions of symplectic manifolds, Hamiltonian vector fields and
Poisson brackets.  In lecture 2 I described prequantization.  Lecture
3 dealt with polarizations.  I have mostly limited myself to real
polarizations. In the original version I tried to explain half-forms.
Here I stick with densities.  In lecture 4 I came back to
prequantization and tried to explain why it is more natural to
prequantize a differential cocycle rather than just a two form.  In
other words prequantization is taken up from a more functorial point
of view --- differential cohomology and stacks.  For reasons of space
and time the treatment is not very detailed.  The notes also contain two
appendices: the first one recalls bits and pieces of category theory;
the second discusses densities. \\

\noindent {\bf Acknowledgments:}\quad I thank the organizers of the summer
school at Notre Dame for inviting me to give the talks and providing
me with a note taker.  I thank the referee for a careful reading of
the manuscript and a number of helpful suggestions.

\section{An outline of the notes}

The goal of this mini course is --- starting with a classical system
(which is modeled as a symplectic manifold together with a function
called the Hamiltonian) --- to produce a quantum system,
that is,  a collection of (skew)adjoined operators on a Hilbert space.\\

Here is a more detailed plan of the lectures (the possibly unfamiliar
terms are to be defined later in the course):
\begin{itemize}
\item We go from Newton's law of motion to a symplectic formulation of
  classical mechanics, while cutting quite a few corners along the
  way.
\item Next we have a crash course on symplectic geometry.  The two key
  points are:
\begin{itemize}
\item A function $h$ on a symplectic manifold $(M, \omega)$ uniquely
  defines a vector field $\Xi_h$ on the manifold $M$.

\item There is a Poisson bracket, that is, an $\R$-bilinear map
\[
C^\infty (M)\times C^\infty (M) \to C^\infty (M), \quad (f,g)\mapsto \{f,g\},
\]
which has a number of properties.  In particular, the bracket $\{f,g\}$ makes
$C^\infty (M)$ into a Lie algebra.

\end{itemize}
\item Next we'll discuss {\sf prequantization}:
Given a symplectic manifold $(M,\omega)$ and a corresponding Poisson
bracket $\{ \cdot, \cdot \}$ we want to find/construct a complex line bundle
$\pi: \LL\rightarrow M$ with a Hermitian inner product
\newline$<\cdot,\cdot>$ and a connection $\nabla$ on $L$ such that:
\begin{itemize}
\item the connection $\nabla$ preserves the inner product $<\cdot,\cdot>$

\item $curv(\nabla)=(2\pi\sqrt{-1})\omega$
\end{itemize}
Given such a bundle we get a prequantum Hilbert Space $\Hh_0$, which
consists of $L^2$ sections of $\LL\rightarrow M$.  We'll observe:
\begin{itemize}
\item Each function $f\in C^{\infty}(M)$ defines an operator
  $Q_f:\Hh_0\rightarrow\Hh_0$,
$$Q_f(s)=(2\pi\sqrt{-1})fs+\nabla_{\Xi_f} s$$
where $\Xi_f$ is the Hamiltonian vector field generated by the function $f$.

\item The map
\[
C^{\infty}(M)\overset{Q}\longrightarrow\{\mathrm{skew\ Hermitian\
  Operators\ on\ } \Hh_0\}
\]
 given by $f\mapsto Q_f$ is a map of Lie algebras.
\end{itemize}
\item There is problem with prequantization: quantum mechanics tells
  us that the space $\Hh_0$ is too big.  Here is an example.
  \begin{Example} Consider a particle in $\R^3$.  The corresponding
    classical phase space is $M=T^*\R^3$.  The associated line bundle
    $\LL=T^*\R^3\times\C$ is trivial, and the prequantum Hilbert space is
    $\Hh_0=L^2(T^*\R^3,\C)$.  Physics tells us that what we
    should have as our quantum phase space the vector space $L^2(\R^3,\C)$.
 \end{Example}
\noindent
 One then uses {\sf polarizations} to cut the number of variables in
 half.  The introduction of polarizations leads to a number of
 technical problems.  In other words, this is where our trouble really begins.
\end{itemize}
In the first appendix to the paper we remind the reader what
categories, functors, natural transformations and equivalences of
categories are.  In the second appendix we discuss densities.

\section{From Newton's law of motion to geometric mechanics in one hour}


Consider a single particle  of mass $m$ moving on a line $\R$ subject to a force $F(q,t)$.  Newton's law of motion
in this case says: the trajectory $q(t)$ of the particles solves the second order ordinary differential equation (ODE):
\begin{equation}\label{eq1}
m\dfrac{d^2q}{dt^2}=F(q(t),q'(t),t).
\end{equation}
We now make two simplifying assumptions: (1) the force  $F$ only depends on position $q$  and (2) the force $F$ is
conservative --- that is, $F(q)=-V'(q)$ for some $V\in C^{\infty}(M)$. Then \eqref{eq1} becomes:
\begin{equation}\label{eq2}
m\dfrac{d^2q}{dt^2}=-V'(q(t)).
\end{equation}
The standard way to deal with equation \eqref{eq2} is to introduce a new variable $p$ ("momentum") so that
$p=m\dfrac{dq}{dt}$ and convert \eqref{eq2} into a system of first order ODEs.  That is, if $p=m\dfrac{dq}{dt}$ then
$m\dfrac{d^2q}{dt^2}=\dfrac{dp}{dt}$.  Thus every solution of
\begin{equation}\label{eq.star}
\begin{cases}
\dfrac{dq}{dt}=\dfrac{1}{m}p \\
\dfrac{dp}{dt}=-V'(q)
\end{cases}
\end{equation}
solves \eqref{eq2}.  On the other hand,
a solution of \eqref{eq.star} is an integral curve of a vector field $\Xi(p,q)$, where
\[
 \Xi(p,q)=\dfrac{1}{m}p\ \frac{\partial}{\partial q}-V'(q)\ \frac{\partial}{\partial p}.
\]
 Note: the energy $H(q,p)=\dfrac{1}{2m}p^2+V(q)$ is conserved, i.e., it is constant along solutions of \eqref{eq.star}. In fact the function $H$ completely determines the vector field $\Xi$ in the following sense.
 Consider the two-form $\omega=dp\wedge dq$ on $\R^2$. It is easy to see that
 \[
 \iota(\Xi)\omega=-dH.
 \]
 So H determines $\Xi$.
Notice $\omega=dp\wedge dq$ is nondegenerate, so for any 1-form $\alpha$ the equation $\omega(X,\cdot)=\alpha(\cdot) $ has a unique solution.
To summarize: Newton's equations and $m\dfrac{d^2q}{dt}=-V'(q)$ are equivalent to integrating the vector field $\Xi_H$ defined by $\iota(\Xi_H)\omega =-dH$. We now generalize this observation.

\begin{Definition} A {\sf symplectic form} $\omega$ on a manifold $M$
  is a closed nondegenerate 2-form.  The pair $(M, \omega)$ is called
  a {\sf symplectic manifold}.
\end{Definition}
\begin{remark}
  $d\omega=0$ will give us an important property of the Poisson
  bracket: the Jacobi identity.
\end{remark}
\subsection*{Standard examples of symplectic manifolds}
\begin{example}
 $ (\R^2,dp\wedge dq)$
\end{example}

\begin{example}$(\Sigma,\omega)$ where $\Sigma$ is an orientable
  surface and $\omega$ is an area form (nowhere zero form) on
  $\Sigma$.  Note that since the surface $\Sigma$ is two dimensional,
  $d\omega$ is automatically 0.
\end{example}

\begin{example}\label{ex:2.5} 
  Let $Q$ be any manifold and set $M=T^*Q$. If $\alpha$ denotes the
  tautological 1-form then $\omega=d\alpha$ is a symplectic form on
  $M$.  Here are some details. ``Recall" that there are two ways of
  defining the tautological 1-form (also called the Liouville form)
  $\alpha$.
     \begin{enumerate}
     \item In local coordinates $\alpha$ is defined as follows.  If
       $(q_1, \ldots q_n)$ is a coordinate chart on $Q$ and $(q_1,
       \ldots, q_n, p_1, \ldots, p_n)$ the corresponding coordinates
       on the cotangent bundle $T^*M$, then
    \[
    \alpha=\sum p_idq_i.
    \]
    It is not hard to check that $d\alpha=\omega=\sum dp_i\wedge dq_i$
    is non-degenerate. It is closed by construction, hence it's
    symplectic.  It is not obvious that $\alpha$ (and hence $\omega$)
    are globally defined forms.

\item Alternatively, we have the projection
  $\pi:T^*Q\rightarrow Q$ and $d\pi:T_{(q,p)}(T^*Q)\rightarrow T_qQ$.
  So given $q\in Q$, $p\in T_q^*Q$ and $v\in T_{(q,p)}(T^*Q)$ define
\[
    \alpha_{(q,p)}(v)=p(d\pi(v)).
\]
\end{enumerate}
It is a standard exercise to check that the two constructions agree.
In the first construction of $\alpha$ it is clear  that $\alpha$ is
smooth and $d\alpha$ is nondegenerate. In the second construction it
is clear that $\alpha$ is globally defined.
\end{example}

\begin{Definition} The {\sf Hamiltonian vector field} $\Xi_f$ of a
  function $f$ on a symplectic manifold $(M,\omega)$ is the unique
  vector field defined by $\omega(\Xi_f,\cdot)=-df$.
\end{Definition}
\noindent
{\bf Warning}: the opposite sign convention is also frequently used in
literature: $\omega(\Xi_f,\cdot)=df$.
\begin{remark}
The function  $f$ is always constant along the integral curves of its Hamiltonian vector field $\Xi_f$.
\end{remark}
\begin{proof}
Let $\gamma$ be an integral curve of the vector field $\Xi_f$.  Then  \[
\dfrac{d}{dt}f(\gamma(t))=\Xi_f(f)=df(\Xi_f)=-\omega(\Xi_f,\cdot)(\Xi_f)=-\omega(\Xi_f,\Xi_f).
\]
Since $\omega$ is skew-symmetric, $\omega(\Xi_f,\Xi_f)=0$.  Hence $\dfrac{d}{dt}f(\gamma(t))=0$, i.e., $f(\gamma(t))$ is a constant function of $t$,
which is what we wanted to prove.
\end{proof}
\subsection*{Poisson Bracket}
\begin{Definition}
The {\sf Poisson bracket} $\{\cdot,\cdot\}$ on a symplectic manifold $(M,\omega)$ is a map
\[
\{\cdot,\cdot\}:C^\infty(M)\times C^\infty(M)\rightarrow C^\infty(M)
\]
defined by
\[
\{f,g\}=\Xi_f(g).
\]
\end{Definition}

\begin{remark}
The Poisson bracket has a number of useful properties which we list below.
Proofs may be found in any symplectic geometry book.
\begin{enumerate}
\item For any three function $f,g,h\in C^\infty(M)$ we have
\[
\{f,g \cdot h\}=\Xi_f(g\cdot h)=\Xi_f(g)\cdot h+g\cdot\Xi_f(h)=\{f,g\}\cdot h+g\cdot\{f,h\}.
\]
\item  For any pair of functions $f,g$ we have
$\{f,g\}=\Xi_f(g)=dg(\Xi_f)=-\omega(\Xi_g,\Xi_f)=\omega(\Xi_f,\Xi_g)=-\{g,f\}$. In particular $\{f,f\}=0$.

\item One can show that the equation $d\omega=0$ implies that
\[
\{f,\{g,h\}\}=\{\{f,g\},h\}+\{g,\{f,g\}\},
\]
which is the Jacobi identity.  In other words the pair $(C^\infty (M), \{\cdot, \cdot\})$ is a Lie algebra.
\item  It is not hard to show that $\{f,g\}=0$ if and only if $g$ is constant along integral curves of $f$.
Indeed,  let $\gamma$ be an integral curve of the vector field $\Xi_f$.  Then
\[
\dfrac{d}{dt}g(\gamma(t))=\Xi_f(g)\, (\gamma(t))= \{f,g\} (\gamma (t)).
\]
Therefore if the function $g$ is constant along $\gamma$ then the bracket $\{f,g\}$ is zero along $\gamma$.   The converse is true as well.  This generalizes the fact that a function $f$ is constant along the integral curves of its Hamiltonian vector field $\Xi_f$.

\item One can show that $\iota([\Xi_f,\Xi_g])\omega=-d\{f,g\}$.  Hence if the Poisson bracket $\{f,g\}$ of two functions $f,g$ is  $0$ then flows of their Hamiltonian vector fields commute.  Here is a better interpretation of the same fact:
    The map
    \[
    C^\infty (M) \to \textrm{ vector fields on }M,\quad f\mapsto \Xi_f
    \]
    is a map of Lie algebras: $\Xi_{\{f,g\}}=[\Xi_f,\Xi_g]$.

\end{enumerate}
\end{remark}

\begin{remark}
  It is not hard to show that is $(M, \omega)$ is a symplectic
  manifold, then its dimension is necessarily even.  This only
  involves linear algebra.

  Say $\dim M = 2n$.  Then one can show further that the $2n$-form
  $\omega^n:= \overbrace{\omega \wedge \cdots \wedge \omega}^n $
  ($n$-fold wedge product) is nowhere zero, hence defines an
  orientation of $M$.  In particular, this allows us to integrate any
  compactly supported function $f\in C^\infty (M)$ over $M$ by
  integrating the form $f\omega^n$.  The space $L^2 (M, \omega)$ is
  then defined as the completion of the space $C_c^\infty(M, \C)$ of
  compactly supported functions with respect to the $L^2$ norm
\[
\|f\|:= \left(\int _M |f|^2 \omega^n \right)^{1/2}.
\]
It is a Hilbert space with the Hermitian inner product
\[
\langle\langle f,g\rangle\rangle := \int _M \bar{f}g\omega^n
\]
(in the convention I prefer, the Hermitian inner products are
complex-linear in the {\sf second} variable).
\end{remark}






\noindent
We end the section with an easy lemma that we will need later.

\begin{lemma}\label{lem1.9}
  Let $(M,\omega)$ be a symplectic manifold and $f:M\to \R$ a smooth
  function. Then the Lie derivative $\Ll_{\Xi_f}\omega$ of the
  symplectic form with respect to the Hamiltonian vector field of the
  function $f$ is zero:
\[
\Ll_{\Xi_f}\omega =0.
\]
\end{lemma}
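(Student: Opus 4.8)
The plan is to invoke Cartan's magic formula, which expresses the Lie derivative of a differential form along a vector field $X$ as
\[
\Ll_X = d\circ \iota(X) + \iota(X)\circ d.
\]
Applying this with $X = \Xi_f$ and the form $\omega$ gives
\[
\Ll_{\Xi_f}\omega = d\bigl(\iota(\Xi_f)\omega\bigr) + \iota(\Xi_f)\,d\omega,
\]
so the computation reduces to understanding the two terms on the right-hand side.

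The second term vanishes immediately: by the definition of a symplectic form, $\omega$ is closed, so $d\omega = 0$ and hence $\iota(\Xi_f)\,d\omega = 0$. For the first term I would use the defining property of the Hamiltonian vector field, namely $\omega(\Xi_f,\cdot) = -df$, which is exactly the statement $\iota(\Xi_f)\omega = -df$. Therefore
\[
d\bigl(\iota(\Xi_f)\omega\bigr) = d(-df) = -d^2 f = 0,
\]
using $d^2 = 0$. Combining the two observations yields $\Ll_{\Xi_f}\omega = 0$, as claimed.

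There is essentially no hard step here; the only thing to be careful about is citing Cartan's formula correctly and matching the sign convention for $\Xi_f$ fixed earlier in the notes. If one prefers not to quote Cartan's formula as a black box, an alternative would be to verify the identity locally in Darboux-type coordinates, but that is more laborious and not necessary. I would present the three-line argument above as the proof, remarking that the closedness of $\omega$ is precisely what makes the flow of a Hamiltonian vector field symplectic.
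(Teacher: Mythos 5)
Your proof is correct and follows the same approach as the paper: Cartan's magic formula, closedness of $\omega$ to kill the $\iota(\Xi_f)\,d\omega$ term, and the defining relation $\iota(\Xi_f)\omega = -df$ together with $d^2=0$ to kill the other. Nothing to add.
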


\begin{proof}
  This is an application of Cartan's formula: for a differential form
  $\sigma$ its Lie derivative $\Ll_X\sigma$ with respect to a vector
  field $X$ is given by $\Ll_X \sigma = \iota(X)d \sigma + d
  \iota(X)\sigma$, where, as above, $\iota(X)\sigma$ denotes the
  contraction of $X$ and $\sigma$, etc.  Thus
\[
\Ll_{\Xi_f}\omega = \iota (\Xi_f)d\omega + d \iota (\Xi_f)\omega.
\]
The first summand above is 0 since $d\omega=0$.  By definition of
$\Xi_f$, $\iota (\Xi_f)\omega = -df$.  Since $d(df) =0$, the second
summand is zero as well.
\end{proof}

\section{Prequantization}

The goal of this section is to describe geometric 
prequantization.  This is a procedure for turning a classical
mechanical system mathematically formalized as a symplectic manifold
$(M, \omega)$ together with its Poisson algebra of smooth functions
$C^\infty(M)$ (``classical observables'') into a quantum system
formalized as a Hilbert space $\Hh_0$ together with the Lie algebra
of (densely defined) skew-Hermitian operators $\{Q_f| f\in C^\infty
(M)\}$.  Moreover the map
\[
Q: C^\infty (M) \to \End(\Hh_0), \quad f\mapsto Q_f,
\]
should  (and would) be a map of Lie algebras:
\[
Q_{\{f,g\}} = [Q_f, Q_g]
\]
for all functions $f,g\in C^\infty(M)$.  We start by recalling some
notation.
\begin{notation}
  We denote the space of sections of a vector bundle $E\to M$ over a
  manifold $M$ by $\Gamma (E)$.  Thus the space of vector fields on a
  manifold $M$ is denoted by $\Gamma (TM)$.
\end{notation}





\subsection{Connections}
We start by recalling a number of standard definitions and facts.  By a
{\sf fact} I mean a theorem the proof of which will take us too far afield.
Such proofs may be found in any number of textbooks.
\begin{definition}
A {\sf connection} $\nabla$ on a (complex) vector bundle $E\overset{\pi}\rightarrow M$ is a $\C$-bilinear map
\[
\Gamma(TM)\times\Gamma(E)\rightarrow\Gamma(E), \quad (X,s)\mapsto
\nabla_Xs
\]
 such that
\begin{enumerate}
\item $\nabla_{fX}s=f\nabla_Xs$ for all functions $f\in C^\infty
  (M,\C)$ and all vector fields $X\in \Gamma (TM)$ (i.e., $\nabla$ is
  $C^\infty(M,\C)$ linear in the first variable) and
\item $\nabla_X(fs)=X(f)s+f\nabla_Xs$ for all functions $f\in C^\infty
  (M,\C)$ and all vector fields $X\in \Gamma (TM)$ (i.e., $\nabla $ is
  a derivation in second slot).
\end{enumerate}

\end{definition}
If the vector bundle $E$ carries a fiber-wise Hermitian inner product
$\ip{\cdot}{\cdot}$ we can talk about the connections respecting this
structure.  More precisely
\begin{definition}
  A connection $\nabla$ on a vector bundle $E\overset{\pi}\rightarrow
  M$ with a fiber-wise inner product $\ip{\cdot}{\cdot}$ is {\sf
    Hermitian} if
\[
X(\ip{s}{s'})=\ip{\nabla_Xs}{s'}+\ip{s}{\nabla_Xs'}
\] for all vector fields $X$ on $M$ and all section $s,s'\in \Gamma (E)$.
\end{definition}

\noindent
Next recall that given any complex vector bundle
$E\overset{\pi}\rightarrow M$ we can consider the bundle
$\End(E)\rightarrow M$ of endomorphisms with a fiber $\End(E)_x$ at a point $x\in M$
given by
\[
\End(E)_x=\{A:E_x\rightarrow E_x \mid A \mathrm{\ is\ } \C\
\mathrm{linear}\}.
\]
We also have the subbundle of
skew Hermitian maps $\End(E,\ip{\cdot}{\cdot})\subset \End(E)$ with typical fiber
\[
\End(E,\ip{\cdot}{\cdot})_x=\{A:E_x\rightarrow E_x \mid A\mathrm{\ is\ }
\C\ \mathrm{linear\ and \ } \ip{Av}{w}+\ip{w}{Av}=0 \ \ \forall v,w\in
E_x\}.
\]

\noindent
We have the following {\em Fact} :
\begin{fact} The space of Hermitian connections on a vector bundle
  $(E,\ip{\cdot}{\cdot})$ is non empty.  In fact it
  is 
  an infinite dimensional affine space: the difference of two
  connections is a $\End(E,\ip{\cdot}{\cdot} )$ valued 1-form.
\end{fact}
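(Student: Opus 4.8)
The plan is to prove non-emptiness by a partition of unity argument and then to establish the affine structure by a direct computation.

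\emph{Existence.} First I would choose a locally finite open cover $\{U_\alpha\}$ of $M$ over each of which $E$ is trivial; applying the Gram--Schmidt process to an arbitrary local frame, I may assume that over each $U_\alpha$ there is a frame $(e^\alpha_1, \dots, e^\alpha_k)$ that is orthonormal for $\ipd$. On $E|_{U_\alpha}$ define $\nabla^\alpha$ by declaring this frame to be parallel, i.e. $\nabla^\alpha_X\bigl(\sum_i f_i e^\alpha_i\bigr) := \sum_i (Xf_i)\, e^\alpha_i$. A one-line check shows $\nabla^\alpha$ is a Hermitian connection on $E|_{U_\alpha}$: for $s = \sum_i f_i e^\alpha_i$ and $s' = \sum_i g_i e^\alpha_i$ one has $\ip{s}{s'} = \sum_i \bar f_i g_i$, and differentiating this function along $X$ yields exactly $\ip{\nabla^\alpha_X s}{s'} + \ip{s}{\nabla^\alpha_X s'}$. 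Now pick a partition of unity $\{\rho_\alpha\}$ subordinate to $\{U_\alpha\}$ and set $\nabla := \sum_\alpha \rho_\alpha \nabla^\alpha$. Local finiteness makes this well defined, and $C^\infty(M,\C)$-linearity in $X$ is inherited term by term. The Leibniz rule holds because $\sum_\alpha \rho_\alpha = 1$: the anomalous terms $X(f)s$ collect with total coefficient $\sum_\alpha \rho_\alpha$, which equals $1$. For the same reason $\ip{\nabla_X s}{s'} + \ip{s}{\nabla_X s'} = \sum_\alpha \rho_\alpha\bigl(\ip{\nabla^\alpha_X s}{s'} + \ip{s}{\nabla^\alpha_X s'}\bigr) = \sum_\alpha \rho_\alpha\, X(\ip{s}{s'}) = X(\ip{s}{s'})$, so $\nabla$ is Hermitian. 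This proves the space of Hermitian connections is non-empty.

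\emph{Affine structure.} Given two connections $\nabla, \nabla'$, the difference $A := \nabla - \nabla'$ satisfies $A_X(fs) = f A_X(s)$ (the $X(f)s$ terms cancel) and $A_{fX}(s) = f A_X(s)$, hence $A$ is $C^\infty(M,\C)$-linear in both arguments and therefore is a $1$-form with values in $\End(E)$; conversely, adding an $\End(E)$-valued $1$-form to a connection gives a connection. If in addition $\nabla$ and $\nabla'$ are both Hermitian, subtracting the two Hermitian identities yields $\ip{A_X s}{s'} + \ip{s}{A_X s'} = 0$ for all $s, s'$, so $A_X$ is fibrewise skew-Hermitian and $A$ is an $\End(E, \ipd)$-valued $1$-form. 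Conversely, adding such a $1$-form to a Hermitian connection preserves the Hermitian identity. Thus the set of Hermitian connections is an affine space modeled on the (infinite-dimensional) vector space of $\End(E, \ipd)$-valued $1$-forms.

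\emph{Where the difficulty lies.} There is no real obstacle here; the only points requiring a little care are that the argument uses the existence of a partition of unity on $M$ (available since manifolds are taken to be paracompact) and, more conceptually, that the Hermitian condition is \emph{affine} in $\nabla$, which is precisely why a convex combination of Hermitian connections along a partition of unity is again Hermitian.
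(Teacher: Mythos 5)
The paper explicitly declines to prove this statement: it is labeled a ``Fact,'' which by the paper's own convention (``By a {\sf fact} I mean a theorem the proof of which will take us too far afield'') means the proof is deliberately omitted and the reader is referred to textbooks. Your argument is the standard textbook proof and is correct: existence by patching local trivial connections (in orthonormal frames) with a partition of unity, exploiting the fact that the connection and Hermitian-compatibility axioms are affine in $\nabla$ so that convex combinations survive; and the affine structure by a direct check that the difference of two connections is $C^\infty(M,\C)$-bilinear, hence tensorial, hence an $\End(E)$-valued $1$-form, with skew-Hermitian values when both connections are Hermitian, together with the converse. One very minor remark: you could note explicitly that the model vector space $\Omega^1\bigl(M,\End(E,\ipd)\bigr)$ is infinite-dimensional (e.g.\ because $C^\infty(M,\R)$ already is, whenever $\dim M \geq 1$), since the statement asserts infinite-dimensionality of the affine space and not merely its affine structure; but this is immediate and your proof otherwise supplies everything the statement claims.
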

\noindent
\begin{definition}
  Let $\nabla:\Gamma(TM)\times\Gamma(E)\rightarrow\Gamma(E)$ be a
  connection on a vector bundle $E\to M$.  The {\sf curvature}
  $R^\nabla$ of the connection is a section of $\Lambda^2(T^*M)\otimes
  \End(E)$ (i.e., an $\End(E)$ valued 2-form). It is defined by
\[
R^\nabla(X,Y)s=\nabla_X(\nabla_Ys)-\nabla_Y(\nabla_Xs)-\nabla_{[X,Y]}s
\]
for all vector fields $X$ and $Y$ and all sections $s\in \Gamma(E)$.
\end{definition}
If $\nabla$ is a Hermitian connection then its curvature $R^\nabla$
is a 2-form with values in $\End(E,\ip{\cdot}{\cdot})$. Furthermore if
$E\overset{\pi}\rightarrow M$ is a complex line bundle then
\[
\End(E,\ip{\cdot}{\cdot})\simeq M\times\sqrt{-1}\R
\]
hence
\[
\frac{1}{\sqrt{-1}}R^\nabla\in\Omega^2(M,\R).
\]
That is $\frac{1}{\sqrt{-1}}R^\nabla$ is an ordinary real valued 2-form.

Fix a manifold $M$ and consider the collection $\mathcal{D}(M)$ of all
triples $(L, \ip{\cdot}{\cdot}, \nabla)$, where $L\to M$ is a complex line bundle, $\ip{\cdot}{\cdot}$ is a Hermitian inner product on $L$ and $\nabla$ is a Hermitian connection on $(L, \ip{\cdot}{\cdot})$.  Then curvature defines a map
\begin{equation}\label{eq:2.1}
\mathcal{D}(M)\to \Omega^2(M), \quad (L, \ip{\cdot}{\cdot}, \nabla) \mapsto
\frac{1}{\sqrt{-1}}R^\nabla
\end{equation}
from the collection $\mathcal{D}(M)$ to the set of (real-valued)
2-forms $\Omega^2(M)$.  

To define {\sf geometric prequantization} one needs to invert this map.
That is, given a symplectic manifold $(M,\omega)$ one would like to
find a Hermitian line bundle with  a
Hermitian connection $\nabla$ so that
\[
\frac{1}{\sqrt{-1}}R^\nabla =
\omega.
\]
However there are two problems: (1) the map \eqref{eq:2.1} is
not 1-1 and (2) not all symplectic forms are in the image of the map.
The first problem has to do with the fact that taking curvature of a
connection is very much like taking the exterior derivative of a
1-form.  So recovering connection from its curvature is also like
recovering a 1-form from its exterior derivative.

The second problem is topological.  It has to do with the fact that
(isomorphism classes of) complex line bundles are parametrized by
degree 2 integral cohomology classes, that is, elements of $H^2 (M,
\Z)$.  Moreover the cohomology class $c_1(M)$ of a line bundle $E\to
M$ and the de Rham class $[\frac{1}{2\pi\sqrt{-1}}R^\nabla]$ defined
by the curvature $R^\nabla$ of a Hermitian connection $\nabla$ on $E$
are closely related: $[\frac{1}{2\pi\sqrt{-1}}R\nabla]$ is the image
of $c_1(E)$ under the natural map
\[
\iota: H^2(M,\Z) \to H^2 (M, \Z)\otimes \R \simeq H^2_{dR}(M).
\]
Consequently the integral of the 2-form
$\frac{1}{2\pi\sqrt{-1}}R^\nabla$ over any smooth integral 2-cycle in
$M$ has to be an integer.  Hence the only symplectic forms that
can be prequantized (that is, can occur as curvatures) are the forms
whose integration over integral 2-cycles give integers.  And if a
symplectic form $\omega$ is integral (that is, the de Rham class
$[\omega]$ lies in the image of the map $\iota$ above), the lift of
$[\omega]$ to $H^2 (M, \Z)$ need not be unique.  A
solution to these problems (independently due to Kostant and to
Souriau) can be stated as follows:

\begin{Theorem}\label{thm:3.6} Suppose the de Rham cohomology class of a closed
  2-form $\sigma$ on a manifold $M$ lies in the image of $\iota:
  H^2(M,\Z) \to H^2_{dR} (M)$. Then there exists a Hermitian line
  bundle $E\to M$ with a Hermitian connection $\nabla$ such that
  $\frac{1}{2\pi\sqrt{-1}}R^\nabla=\sigma$.
\end{Theorem}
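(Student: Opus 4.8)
The plan is to build the Hermitian line bundle and its connection simultaneously out of local data on a good cover, bringing in the integrality hypothesis at exactly one step. First I would fix a good open cover $\{U_\alpha\}$ of $M$ (every manifold has one). Since $\sigma$ is closed and each $U_\alpha$ is contractible, the Poincar\'e lemma yields primitives $\theta_\alpha\in\Omega^1(U_\alpha)$ with $\sigma|_{U_\alpha}=d\theta_\alpha$. On a double overlap $U_{\alpha\beta}:=U_\alpha\cap U_\beta$ the $1$-form $\theta_\alpha-\theta_\beta$ is closed, so $\theta_\alpha-\theta_\beta=df_{\alpha\beta}$ for some $f_{\alpha\beta}\in C^\infty(U_{\alpha\beta},\R)$; and on a triple overlap the function $c_{\alpha\beta\gamma}:=f_{\alpha\beta}+f_{\beta\gamma}+f_{\gamma\alpha}$ has vanishing differential, hence is a real constant. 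One checks that $(c_{\alpha\beta\gamma})$ is a \v{C}ech $2$-cocycle for the constant sheaf $\underline{\Z}$-coefficients enlarged to $\underline{\R}$, and that under the \v{C}ech--de Rham isomorphism its class corresponds to $[\sigma]$.

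The hypothesis now enters at a single point. Because $[\sigma]$ lies in the image of $\iota\colon H^2(M,\Z)\to H^2_{dR}(M)$, the class of $(c_{\alpha\beta\gamma})$ lies in the image of $\check H^2(M;\underline{\Z})\to\check H^2(M;\underline{\R})$, so $(c_{\alpha\beta\gamma})$ is cohomologous to an integer-valued cocycle. Subtracting the relevant \v{C}ech coboundary and correspondingly replacing each $f_{\alpha\beta}$ by $f_{\alpha\beta}-\lambda_{\alpha\beta}$ for suitable constants $\lambda_{\alpha\beta}$ — which leaves $df_{\alpha\beta}=\theta_\alpha-\theta_\beta$ unchanged — I may assume $c_{\alpha\beta\gamma}\in\Z$ for all $\alpha,\beta,\gamma$. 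Everything after this is formal.

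Granting integrality, set $g_{\alpha\beta}:=\exp\bigl(2\pi\sqrt{-1}\,f_{\alpha\beta}\bigr)\colon U_{\alpha\beta}\to U(1)$. On triple overlaps $g_{\alpha\beta}g_{\beta\gamma}g_{\gamma\alpha}=\exp(2\pi\sqrt{-1}\,c_{\alpha\beta\gamma})=1$, so the $g_{\alpha\beta}$ are transition functions of a complex line bundle $E\to M$; since they take values in $U(1)$, declaring the distinguished local frames to be of unit length produces a well-defined Hermitian inner product $\ip{\cdot}{\cdot}$ on $E$. I then define a connection by the local rule $\nabla:=d+2\pi\sqrt{-1}\,\theta_\alpha$ in the trivialization of $E$ over $U_\alpha$. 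The identity $g_{\alpha\beta}\inv dg_{\alpha\beta}=2\pi\sqrt{-1}\,df_{\alpha\beta}=2\pi\sqrt{-1}(\theta_\alpha-\theta_\beta)$ is exactly the gauge-transformation law relating $2\pi\sqrt{-1}\,\theta_\alpha$ and $2\pi\sqrt{-1}\,\theta_\beta$, so these local expressions patch to a global connection $\nabla$ on $E$; it is Hermitian because every local connection form $2\pi\sqrt{-1}\,\theta_\alpha$ takes values in $\sqrt{-1}\,\R$. Finally, $E$ has rank one, so its curvature in a local frame is just the exterior derivative of the connection form: $R^\nabla|_{U_\alpha}=d\bigl(2\pi\sqrt{-1}\,\theta_\alpha\bigr)=2\pi\sqrt{-1}\,d\theta_\alpha=2\pi\sqrt{-1}\,\sigma$. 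Thus $\frac{1}{2\pi\sqrt{-1}}R^\nabla=\sigma$, as claimed.

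The step I expect to be the main obstacle is the integrality reduction in the second paragraph — the assertion that a de Rham class is integral exactly when the \v{C}ech cocycle attached to it above can be chosen with values in $\Z$. I would prove this by setting up the \v{C}ech--de Rham double complex for the cover $\{U_\alpha\}$ (a generalized Mayer--Vietoris argument) and tracking $[\sigma]$ along the zig-zag from $\sigma$ through $(\theta_\alpha)$ and $(f_{\alpha\beta})$ to $(c_{\alpha\beta\gamma})$, then using naturality of the comparison maps with respect to the coefficient inclusion $\Z\hookrightarrow\R$; alternatively one can run the argument through the exponential sheaf sequence relating $\underline{\Z}$, the sheaf of smooth real-valued functions, and the sheaf of smooth $U(1)$-valued functions, though that route displays the connection less explicitly. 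All the remaining verifications — the cocycle identity for the $g_{\alpha\beta}$, the patching of the local connection forms, the Hermitian property, and the curvature computation — are routine local checks.
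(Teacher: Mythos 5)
Your proof is correct, and it is a genuinely different argument from the one the paper has in mind. The paper does not prove Theorem~\ref{thm:3.6} directly; it flags the statement as a \emph{corollary} of the equivalence of categories $DCh_M\colon DBS^1(M)\to\Dd\Cc(M)$ between principal $S^1$-bundles with connection and Hopkins--Singer differential cocycles, developed in the last section following \cite{HS} and \cite{LerMalk}. In that framework the hypothesis that $[\sigma]$ is integral is used to manufacture a differential cocycle $(c,h,\sigma)$: choose a singular integral cocycle $c$ lifting $[\sigma]$, use the de~Rham theorem to write $\sigma-c=-\delta h$ for some real $1$-cochain $h$, and then apply a homotopy inverse of $DCh_M$ to $(c,h,\sigma)$ to produce the bundle-with-connection. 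Your argument instead is the classical hands-on construction: a good cover, local primitives $\theta_\alpha$, comparison functions $f_{\alpha\beta}$, the constant \v{C}ech $2$-cocycle $c_{\alpha\beta\gamma}$, integrality adjustment by constants $\lambda_{\alpha\beta}$, exponentiation to $U(1)$-valued transition functions $g_{\alpha\beta}$, and local connection forms $2\pi\sqrt{-1}\,\theta_\alpha$. Every step you list checks out (in particular the gauge-transformation identity $g_{\alpha\beta}\inv dg_{\alpha\beta}=2\pi\sqrt{-1}(\theta_\alpha-\theta_\beta)$ and the curvature computation are exactly right), and the ``main obstacle'' you isolate --- matching the integrality of the de~Rham class to the integrality of the \v{C}ech cocycle --- is indeed the only place where the hypothesis is used; the \v{C}ech--de~Rham zig-zag and the naturality of coefficient maps handle it, as you say. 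The trade-off between the two approaches: yours is elementary, self-contained, and constructs the bundle concretely, but it lives on a fixed good cover and leaves the ``not 1-1'' half of the picture unaddressed; the paper's categorical route is heavier machinery, but it simultaneously classifies \emph{all} prequantizations up to isomorphism (automorphisms of differential cocycles track the non-uniqueness), is functorial in $M$, and, as the paper notes, extends to orbifolds essentially for free.
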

There is another solution to this problem that I find more
satisfactory and to which Theorem~\ref{thm:3.6}  is a corollary. It has the
additional merit of allowing one to prequantize orbifolds as well.
It involves thinking of $\mathcal{D}(M)$ not just as a set but as a
collection of objects in a category and upgrading the map
\eqref{eq:2.1} to a functor. The target of this functor is a category
of differential cocycles: the objects of this category involve integral
cocycles {\em and } differential forms.  The functor will turn out to
be an equivalence of categories.  So it can be easily inverted (up to
homotopy). We will take this up in the last section of the notes.  In
the mean time we proceed with prequantization.

\begin{definition}
Suppose $(E\to M,\ip{\cdot}{\cdot},\nabla)$ is a Hermitian line
bundle with connection such that
$\omega:=\frac{1}{2\pi\sqrt{-1}}R^\nabla$ is symplectic.
The {\sf prequantization} is a linear map
\[
 Q:C^\infty(M)\rightarrow \Hom(\Gamma(E),\Gamma(E)), \quad f\mapsto Q_f
\]
where the operator $Q_f$ is defined by
\[
Q_f(s)=\nabla_{\Xi_f}s-2\pi\sqrt{-1}f\cdot s
\]
for all functions $f\in C^\infty (M)$ and all section $s\in \Gamma
(E)$.  Here, as before, $\Xi_f$ denotes the Hamiltonian vector field
of $f$ with respect to the symplectic form $\omega$.
\end{definition}

\begin{remark}
  Our definition of $Q$ differs from a more traditional one by
  $\sqrt{-1}$.  The physicists like to identify the Lie algebra of the
  unitary group with Hermitian matrices (and operators).
\end{remark}

We next prove:
\begin{lemma}
  The prequantization map $Q:C^\infty(M)\rightarrow
  \Hom(\Gamma(E),\Gamma(E))$ is a map of Lie algebras:
\[
[Q_f,Q_g]s=Q_{\{f,g\}}s
\]
for all sections $s\in \Gamma (E)$ and all functions $f,g\in C^\infty
(M)$.
\end{lemma}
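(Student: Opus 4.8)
\emph{Proof plan.} The strategy is to expand both sides directly from the definition $Q_f(s)=\nabla_{\Xi_f}s-2\pi\sqrt{-1}\,f\cdot s$ and to compare, feeding in three ingredients: the Leibniz rule for $\nabla$ in the second slot, the definition of curvature, and the facts about the Poisson bracket recorded above (namely $\omega(\Xi_f,\Xi_g)=\{f,g\}$, $\{g,f\}=-\{f,g\}$, and $[\Xi_f,\Xi_g]=\Xi_{\{f,g\}}$). First I would compute the composite. Applying $Q_f$ to $Q_g s=\nabla_{\Xi_g}s-2\pi\sqrt{-1}\,g s$ and using $\nabla_{\Xi_f}(g s)=\Xi_f(g)\,s+g\,\nabla_{\Xi_f}s=\{f,g\}\,s+g\,\nabla_{\Xi_f}s$, one gets
\[
Q_f Q_g s=\nabla_{\Xi_f}\nabla_{\Xi_g}s-2\pi\sqrt{-1}\,\{f,g\}\,s-2\pi\sqrt{-1}\,g\,\nabla_{\Xi_f}s-2\pi\sqrt{-1}\,f\,\nabla_{\Xi_g}s+(2\pi\sqrt{-1})^2 fg\,s .
\]

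Next I would interchange $f$ and $g$ and subtract. The two mixed first-order terms $-2\pi\sqrt{-1}\,g\,\nabla_{\Xi_f}s$ and $-2\pi\sqrt{-1}\,f\,\nabla_{\Xi_g}s$ appear symmetrically in both composites and cancel, as does the zeroth-order term $(2\pi\sqrt{-1})^2 fg\,s$; using $\{g,f\}=-\{f,g\}$ this leaves
\[
[Q_f,Q_g]s=\bigl(\nabla_{\Xi_f}\nabla_{\Xi_g}-\nabla_{\Xi_g}\nabla_{\Xi_f}\bigr)s-4\pi\sqrt{-1}\,\{f,g\}\,s .
\]
I would then rewrite the second-order piece via the definition of curvature, $\nabla_X\nabla_Y s-\nabla_Y\nabla_X s=R^\nabla(X,Y)s+\nabla_{[X,Y]}s$. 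Since $E$ is a line bundle and $\omega=\tfrac{1}{2\pi\sqrt{-1}}R^\nabla$, the endomorphism $R^\nabla(\Xi_f,\Xi_g)$ acts as the scalar $2\pi\sqrt{-1}\,\omega(\Xi_f,\Xi_g)=2\pi\sqrt{-1}\,\{f,g\}$, and $[\Xi_f,\Xi_g]=\Xi_{\{f,g\}}$, so
\[
\bigl(\nabla_{\Xi_f}\nabla_{\Xi_g}-\nabla_{\Xi_g}\nabla_{\Xi_f}\bigr)s=2\pi\sqrt{-1}\,\{f,g\}\,s+\nabla_{\Xi_{\{f,g\}}}s .
\]

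Finally I would assemble the two displays:
\[
[Q_f,Q_g]s=2\pi\sqrt{-1}\,\{f,g\}\,s+\nabla_{\Xi_{\{f,g\}}}s-4\pi\sqrt{-1}\,\{f,g\}\,s=\nabla_{\Xi_{\{f,g\}}}s-2\pi\sqrt{-1}\,\{f,g\}\,s=Q_{\{f,g\}}s ,
\]
which is the claim. The argument is mostly bookkeeping; the only delicate point — and the point worth highlighting — is the accounting of the constant $2\pi\sqrt{-1}$: the antisymmetrized multiplication terms contribute $-4\pi\sqrt{-1}\{f,g\}s$, and it is exactly the $+2\pi\sqrt{-1}\{f,g\}s$ produced by the curvature term $R^\nabla(\Xi_f,\Xi_g)$ that corrects this to the single $-2\pi\sqrt{-1}\{f,g\}s$ matching $Q_{\{f,g\}}$. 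In other words the normalization $2\pi\sqrt{-1}$ in the definition of $Q_f$ is precisely what makes $Q$ a Lie algebra homomorphism, so the main thing to watch is that this cancellation is carried out with the correct signs and factors; everything else reduces to the Leibniz rule and the already-established identities for $\Xi$ and $\{\cdot,\cdot\}$.
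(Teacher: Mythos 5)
Your proof is correct and follows essentially the same route as the paper's: expand $Q_fQ_g s$ by Leibniz, antisymmetrize so the mixed and zeroth-order terms cancel, rewrite $[\nabla_{\Xi_f},\nabla_{\Xi_g}]$ via the curvature identity $R^\nabla=2\pi\sqrt{-1}\,\omega$ together with $\omega(\Xi_f,\Xi_g)=\{f,g\}$ and $[\Xi_f,\Xi_g]=\Xi_{\{f,g\}}$, and collect constants. The only stylistic difference is that the paper records the curvature identity first and writes out both composites explicitly, whereas you compute one and invoke symmetry; the bookkeeping, including the key $-4\pi\sqrt{-1}\{f,g\}s$ vs.\ $+2\pi\sqrt{-1}\{f,g\}s$ cancellation, matches the paper exactly.
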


\begin{proof}
  Since $\omega:=\frac{1}{2\pi\sqrt{-1}}R^\nabla$, we have, by
  definition of curvature that
\[
([\nabla_X,\nabla_Y]-\nabla_{[X,Y]})s=2\pi\sqrt{-1}\omega(X,Y)\cdot s
\]
for all vector fields $X,Y$ on $M$ and all sections $s\in \Gamma (E)$.
Hence
\[
[\nabla_{\Xi_f},\nabla_{\Xi_g}] s = \nabla_{[\Xi_f,\Xi_g]}s +
2\pi\sqrt{-1}\omega(\Xi_f , \Xi_g)\cdot s
\]
for all $f,g\in C^\infty (M)$. Since $\omega(\Xi_f , \Xi_g)=
{\{f,g\}}$ and since $[\Xi_f,\Xi_g] =\Xi _{\{f,g\}}$ we get
\begin{equation}\label{eq:2.2}
[\nabla_{\Xi_f},\nabla_{\Xi_g}] s =
\nabla_{\Xi _{\{f,g\}}}s  +2\pi\sqrt{-1}{\{f,g\}}s.
\end{equation}
Next observe that
\begin{IEEEeqnarray*}{rCl}
  Q_f(Q_gs)&=&Q_f(\nabla_{\Xi_g}s-2\pi\sqrt{-1}g\cdot s)\\
  &=&\nabla_{\Xi_f}(\nabla_{\Xi_g}s-2\pi\sqrt{-1}g\cdot
  s)-2\pi\sqrt{-1}f\cdot(\nabla_{\Xi_g}s-2\pi\sqrt{-1}g\cdot s)\\
  &=&\nabla_{\Xi_f}(\nabla_{\Xi_g}s)-2\pi\sqrt{-1}\Xi_f(g)\cdot
  s-2\pi\sqrt{-1}g\nabla_{\Xi_f}s-2\pi\sqrt{-1}f\nabla_{\Xi_g}s-4\pi^2fg\cdot s
\end{IEEEeqnarray*}
Similarly,
\[
Q_g(Q_fs)=\nabla_{\Xi_g}(\nabla_{\Xi_f}s)-2\pi\sqrt{-1}\Xi_g(f)\cdot s-2\pi\sqrt{-1}f\nabla_{\Xi_g}s-2\pi\sqrt{-1}g\nabla_{\Xi_f}s-4\pi^2g\cdot f\cdot s.
\]
Hence
\begin{IEEEeqnarray*}{rCl}
[Q_f,Q_g]s&=& Q_f(Q_gs)-Q_g(Q_fs)\\
&=&[\nabla_{\Xi_f},\nabla_{\Xi_g}]s-2\pi\sqrt{-1}(\{f,g\}-\{g,f\})s\\
&=&[\nabla_{\Xi_f},\nabla_{\Xi_g}]s-4\pi\sqrt{-1}\{f,g\}s\\
&=& \nabla_{\Xi _{\{f,g\}}}s  +2\pi\sqrt{-1}{\{f,g\}}s
-4\pi\sqrt{-1}\{f,g\}s\quad \textrm{ by \eqref{eq:2.2}}\\
&=& \nabla_{\Xi _{\{f,g\}}}s-2\pi\sqrt{-1}{\{f,g\}}s = Q_{\{f,g\}}s.
\end{IEEEeqnarray*}

\end{proof}

\begin{definition}\label{def.square-integrable}
  Let $(M,\omega)$ be a symplectic manifold of dimension $2m$ and
  $(E\to M, \ip{\cdot, \cdot})$ be a Hermitian line bundle as before.
  A section $s\in \Gamma (E)$ is {\sf square integrable} if the
  integral $\int _M \langle s,s\rangle\, \,\omega^m $ converges.  
\end{definition}

Clearly any section $s$ with compact support is square integrable.  
Moreover, for any two compactly supported sections $s,s'$ of $E\to M$ the function $\langle s, s'\rangle$ is compactly supported, hence the integral
$\int_M\ip{s}{s'}\, \,\omega^m$ converges.

\begin{notation}
We denote the space of compactly supported sections of the bundle $E\to M$ by $\Gamma_c (E)$:
\[
\Gamma_c (E):= \{s\in \Gamma(E)\mid \mathrm{supp} (s) \,\textrm{
  is compact }\}.
\]
\end{notation}

\noindent
The space $\Gamma_c (E)$ of compactly supported sections carries a
natural Hermitian inner product defined by
\[
\inp{s}{s'}=\int_M\ip{s}{s'}\, \,\omega^m
\]
for all  $s,s'\in \Gamma_c(E)$.

\begin{definition} The {\sf prequantum Hilbert space} associated with
  a prequantum line bundle $(E\to M, \ip{\cdot, \cdot})$ is the
  completion of the inner product space
  $(\Gamma_c(E),\inp{\cdot}{\cdot})$ with the respect to the
  corresponding $L^2$ norm:
\[
\Hh_0:= \textrm{ the completion of } \Gamma_c (E).
\]
\end{definition}

\begin{lemma}
  The prequantization map $Q:C^\infty(M)\to \Hom(\Gamma_c(E),\Gamma_c(E))$
  is skew-Hermitian:
\[
\inp{Q_fs}{s'}+\inp{s}{Q_fs'}=0
\]
for all compactly supported sections $s,s'$ of $E\to M$.
\end{lemma}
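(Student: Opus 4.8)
The plan is to compute $\inp{Q_f s}{s'} + \inp{s}{Q_f s'}$ directly from the definition $Q_f s = \nabla_{\Xi_f} s - 2\pi\sqrt{-1} f\cdot s$, splitting the operator into its connection part and its multiplication part. The multiplication part is the easy half: since $f$ is real-valued, the terms $-2\pi\sqrt{-1}f\ip{s}{s'}$ coming from $\inp{-2\pi\sqrt{-1} f s}{s'}$ and $\inp{s}{-2\pi\sqrt{-1} f s'}$ are complex conjugates of each other (recall the Hermitian inner product is conjugate-linear in the first slot), hence cancel pointwise, and therefore their integrals cancel. So the whole statement reduces to showing $\inp{\nabla_{\Xi_f} s}{s'} + \inp{s}{\nabla_{\Xi_f} s'} = 0$, i.e.\ that the connection part is skew-Hermitian on $\Gamma_c(E)$.

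For that, first I would invoke the defining property of a Hermitian connection with $X = \Xi_f$:
\[
\Xi_f\big(\ip{s}{s'}\big) = \ip{\nabla_{\Xi_f}s}{s'} + \ip{s}{\nabla_{\Xi_f}s'}.
\]
Integrating over $M$ against $\omega^m$, it remains to show $\int_M \Xi_f\big(\ip{s}{s'}\big)\,\omega^m = 0$. Here I would use Lemma~\ref{lem1.9}, which gives $\Ll_{\Xi_f}\omega = 0$ and hence $\Ll_{\Xi_f}\omega^m = 0$. Writing $\varphi := \ip{s}{s'} \in C_c^\infty(M,\C)$, we have $\Xi_f(\varphi)\,\omega^m = (\Ll_{\Xi_f}\varphi)\,\omega^m = \Ll_{\Xi_f}(\varphi\,\omega^m)$ since the Lie derivative is a derivation and kills $\omega^m$. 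By Cartan's formula, $\Ll_{\Xi_f}(\varphi\,\omega^m) = d\,\iota(\Xi_f)(\varphi\,\omega^m) + \iota(\Xi_f)\,d(\varphi\,\omega^m)$; the second term vanishes because $\varphi\,\omega^m$ is a top-degree form on $M$, so $\Xi_f(\varphi)\,\omega^m = d\big(\iota(\Xi_f)(\varphi\,\omega^m)\big)$ is exact. Since $s, s'$ are compactly supported, so is $\varphi\,\omega^m$, hence so is the $(2m-1)$-form $\iota(\Xi_f)(\varphi\,\omega^m)$, and Stokes' theorem gives $\int_M d\big(\iota(\Xi_f)(\varphi\,\omega^m)\big) = 0$. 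This finishes the proof.

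The main obstacle — really the only subtlety — is the step $\int_M \Xi_f(\varphi)\,\omega^m = 0$: one must recognize that the integrand is an exact top form, which hinges on combining invariance of $\omega^m$ under the Hamiltonian flow (Lemma~\ref{lem1.9}) with Cartan's formula, and then invoke compact support to apply Stokes without boundary terms. Everything else is bookkeeping: the cancellation of the multiplication terms uses only that $f$ is real and the conjugate-linearity convention for $\ip{\cdot}{\cdot}$, and the splitting of $Q_f$ is immediate from its definition.
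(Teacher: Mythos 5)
Your proposal is correct and follows essentially the same route as the paper's proof: cancel the multiplication term using that $f$ is real, reduce to the connection part via the Hermitian property, then show $\int_M \Xi_f\ip{s}{s'}\,\omega^m = 0$ by combining Lemma~\ref{lem1.9} ($\Ll_{\Xi_f}\omega^m = 0$) with Cartan's formula to exhibit the integrand as an exact compactly supported top form, and finish with Stokes. No meaningful differences.
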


\begin{proof} Observe that
\[
\ip{2\pi\sqrt{-1}fs}{s'}+\ip{s}{2\pi\sqrt{-1}fs'}=0
\]
for all functions $f\in C^\infty(M,\R)$ and all square-integrable
sections $s,s'$.  Next note that since the connection $\nabla$ is
Hermitian we have
\[
\int_M\ip{\nabla_{\Xi_f}s}{s'}\omega^m+\int_M\ip{s}{\nabla_{\Xi_f}s'}\omega^m
=\int_M\Xi_f\ip{s}{s'}\omega^m.
\]
Since the Lie derivative $\Ll_{\Xi_f}\omega$ of the symplectic form with
respect to any Hamiltonian vector field $\Xi_f$ zero (see Lemma~\ref{lem1.9}),
we have $\Ll_{\Xi_f} \omega^m = 0$ as
well.  Hence
\[
  \Ll_{\Xi_f}(\ip{s}{s'}\omega^m)=\Ll_{\Xi_f}(\ip{s}{s'})\omega^m+\ip{s}{s'}\Ll_{\Xi_f}(\omega^m)=\Xi_f\ip{s}{s'}\omega^m + 0.
\]
On the other hand by Cartan's magic formula, for any top degree form
$\mu$ we have
\[
\Ll_{\Xi_f} \mu = \iota (\Xi_f) d \mu + d (\iota (\Xi_f)\mu) =
d (\iota (\Xi_f)\mu),
\]
since $d \mu =0$.  
We conclude that
\begin{IEEEeqnarray*}{rCl}
\int_M\ip{\nabla_{\Xi_f}s}{s'}\omega^m+
\int_M\ip{s}{\nabla_{\Xi_f}s'}\omega^m&=&\int_M\Xi_f\ip{s}{s'}\omega^m\\
&=&\int_M \Ll_{\Xi_f}\left(\ip{s}{s'}\omega^m\right)\\
&=&\int_M d\left(\imath(\Xi_f)\ip{s}{s'}\omega^m\right)=0,\\
\end{IEEEeqnarray*}
where the last equality holds by Stokes' theorem.
The result follows.
\end{proof}

\begin{remark}
  Note that the operators $Q_f$ are not bounded in the $L^2$ norm
  since they involve differentiation.  So they do not extend to
  bounded operators on the completion $\Hh_0$.  However, they are
  elliptic operators, and consequently extend to closed densely defined
  operators on $\Hh_0$.  Not surprisingly their domain of definition
  consists of square integrable sections with square integrable
  (distributional) first derivatives.
\end{remark}

\section{Polarizations}

Recall that geometric prequantization associates to an integral
symplectic manifold $(M, \omega)$ a Hilbert space $\Hh_0$ and
to each real-valued function $f$ on $M$ a skew-Hermitian operator
$Q_f:\Hh_0\to \Hh_0$.    Unfortunately this is not correct
physics.

\begin{example} Suppose our classical configuration space is $\R$, the
  real line.  This is the example with which we started these
  lectures.  The corresponding classical phase space is the
  cotangent bundle $M=T^*\R$ with the canonical symplectic form
  $\omega=dp\wedge dq$.  The corresponding prequantum line bundle $E$
  is trivial: $E= T^*\R\times\C \to T^*\R$.  Hence the prequantum
  Hilbert space is $\Hh_0$ is the space $L^2(T^*\R,\C)$ of complex
  valued square integrable functions.  Quantum mechanics tells us that
  the correct Hilbert space consists of square-integrable functions of
  one variable $L^2(\R,\C)$, not of functions of two variables.
\end{example}

A standard solution to this problem is to introduce a polarization.  To
define polarizations we start with linear algebra.

\begin{Definition}
  A {\sf Lagrangian subspace} $L$ of a symplectic vector space
  $(V,\omega)$ is a subspace satisfying two conditions:
\begin{enumerate}
\item $L$ is {\sf isotropic}: $\omega(v,v')=0$ for all vectors  $v,v'\in L$;

\item $L$ is {\sf maximally} isotropic: for any isotropic subspace
  $L'$ of $V$ containing $L$ we must have $L= L'$.
\end{enumerate}
\end{Definition}
\begin{remark}
A standard argument shows that if $L\subset (V,\omega)$ is Lagrangian then
\[
\dim L = \frac{1}{2} \dim V.
\]
\end{remark}
\begin{Example} If $(V,\omega)= (\R^2,\omega=dp\wedge\ dq)$ then any
  line $L$ in $V$ is Lagrangian.
\end{Example}

\noindent
The analogous definition for submanifolds is as follows:
\begin{definition}
  An immersed submanifold $L$ of a symplectic manifold $(M,\omega)$ is
  {\sf Lagrangian} if $T_xL \subset (T_xM, \omega_x)$ is a Lagrangian
  subspace for each point $x\in L$.
\end{definition}
\begin{Definition}
  A (real) {\sf polarization} on a symplectic manifold $(M,\omega)$ is
  a subbundle $\Ff\subset TM$ of its tangent bundle such that
\begin{enumerate}
\item $\Ff$ is {\sf Lagrangian}: $\Ff_x\subset (T_xM, \omega_x)$ is a
  Lagrangian subspace for all points $x\in M$.
\item $\Ff$ is {\sf integrable} (or {\sf involutive}): for all local
  sections $X,Y$ of $\Ff\to M$, the Lie bracket $[X,Y]$ is again a
  local section of $\Ff$. This conditions is often abbreviated as
  $[\Ff,\Ff]\subset \Ff$.
\end{enumerate}
\end{Definition}

\begin{remark}
  If $\Ff\subset TM$ is an integrable distribution, then by the Frobenius
  theorem there exists a foliation $\mathcal{L}_\Ff$ of $M$ tangent to
  the distribution $\Ff$.  If in addition $\Ff$ is Lagrangian then the
  leaves of $\mathcal{L}_\Ff$ are immersed Lagrangian submanifolds of
  $M$.
\end{remark}

\begin{example} Suppose the symplectic manifold $ M$ is a cotangent
  bundle of some manifold $N$: $M=T^*N$ with its standard symplectic
  form.  Then $M$ has a polarization $\Ff$ given by the kernel of the
  differential of $\pi:T^*N\to N$:
\[
\Ff=\ker(d\pi):T(T^*N)\rightarrow TN.
\]
In local coordinates $(q_1, \ldots, q_n, p_1, \ldots, p_n):T^*U\to \R^n\times \R^n$ ($U\subset N$ open) on $T^*N$ this polarization is given by
\[
\Ff= \left\{ \sum_{i=1}^n a_i\frac{\partial}{\partial p_i}
 \mid a_i \in C^\infty (U)\right\}.
\]
The polarization $\Ff$ is called the {\sf vertical polarization of
  $T^*N$}.
\end{example}

\begin{example}\label{ex:3.punc}
  Consider the punctured plane $M:= \R^2 \smallsetminus \{0\}$ with
  the symplectic form $dp \wedge dq$.  The collection of circles
  $\left\{ C_r := \{(p,q) \in M \mid p^2 +q^2 = r^2\}\right\}_{r>0}$
  forms a Lagrangian foliation of $M$.  The tangent lines to the
  circle define a polarization $\cF$ of $M$.
\end{example}

\begin{remark} Real polarizations need not exist. Here is an example.
  It is not hard to show that any real line bundle over the two-sphere
  $S^2$ has to be trivial, hence has to have a nowhere zero section.
  Thus if $S^2$ has a polarization then it has a nowhere zero vector
  field, which contradicts a well-known theorem.

  Since real polarization need not exist for interesting classical systems, one generalizes 
  the notion of a real polarization to that of a {\sf complex}
  polarization.  A complex polarization on a symplectic manifold
  $(M,\omega)$ is a complex Lagrangian involutive subbundle of the
  complexified tangent bundle $TM\otimes \C$.  We will not discuss
  them further. There is a well-developed theory of complex
  polarizations that an interested reader may consult.

  Finally there are examples due to Mark Gotay
\cite{GotayNoP} of symplectic manifolds
  that admit no polarizations whatsoever, real or complex.  One can
  reconcile oneself to this fact by thinking that not all classical
  mechanical systems have quantum counterparts.
\end{remark}

\begin{Definition} Let $\Ff$ be a polarization on $(M,\omega)$ and
  $(\LL\vectb M,\ip{\cdot}{\cdot},\nabla)$ a prequantum line bundle. A
  section $s\in\Gamma(\LL)$ is {\sf covariantly constant along $\Ff$} if
  $\nabla_Xs=0$ for all sections $X\in\Gamma(\Ff)$.
\end{Definition}

\begin{notation}
  We denote the space of sections of the prequantum line bundle $\LL\to
  M$ covariantly constant along a polarization $\Ff$ by $\Gamma_\Ff
  (\LL)$.  Thus
\[
\Gamma_\Ff (\LL):= \{s\in \Gamma(\LL) \mid \nabla_Xs=0\textrm{ for all }
X\in \Gamma(\Ff)\}.
\]
\end{notation}
\begin{remark} It is common to refer to the space $\Gamma_\Ff (\LL)$
  as the space of {\sf polarized sections}.
\end{remark}
\begin{example}
  If $M =T^*N$, $\LL= T^*N\times \C$ and $\Ff \subset TM$ is the vertical
  polarization, then the space $\Gamma_\Ff (\LL) \subset C^\infty (T^*N,
  \C)$ consists of functions constant along the fibers of $\pi:T^*N\to
  N$.  Thus  $\Gamma_\Ff (\LL) = \pi^* C^\infty (N, \C)$.
\end{example}

\begin{example} \label{ex:3.punct}
Consider again the punctured plane $M= \R^2
  \smallsetminus \{0\}$ with the polarization $\cF$ defined by
  circles.  We may take the trivial bundle $\LL= M\times \C\to M$ as the
  prequantum line bundle.  Its space of sections is simply the space
  of complex valued functions on $M$.  The Hermitian inner product on
  $\LL$ is defined by the standard Hermitian inner product on $\C$.

  For any real-valued 1-form $\alpha$ on $M$ the map $\nabla: \Gamma
  (TM)\times C^\infty (M, \C)\to C^\infty (M, \C)$ defined by
\[
\nabla _X f = Xf + \sqrt{-1} \alpha(X) f
\]
is a Hermitian connection.  Now consider the real-valued 1-form $\alpha$
on on $M$ given in polar coordinates $(r,\theta)$ by the equation
$\alpha = r^2 d\theta$.  A section $f\in C^\infty (M,\C)$ of $\LL$ is
covariantly constant along the polarization defined by the circles
(cf.\ Example~\ref{ex:3.punc}) if and only if
\begin{equation}\label{eq3.2}
\frac{\partial f}{\partial \theta} = -\sqrt{-1} r^2 f.
\end{equation}
A function $f$ solves the above equation if and only if it is of the form
\[
f(r,\theta) = g(r) e^{-\sqrt{-1} r^2 \theta}
\]
for some function $g(r)$.  Such an $f$ is a well-defined function on
the punctured plane only if $r^2 \in \Z$.  Thus \eqref{eq3.2} has
no nonzero smooth solutions.
\end{example}

Let us go back to the general situation: a prequantum line bundle
$(\LL, \langle \cdot, \cdot\rangle, \nabla)$ over a symplectic manifold
$(M, \omega)$ and a real polarization $\cF\subset TM$.  We now make
several assumptions:
\begin{enumerate}
\item The space of leaves $N:= M/\cF$ is a Hausdorff manifold and the
  quotient map $\pi:M\to M/\cF \equiv N$ is a submersion.
\item The space of polarized sections $\Gamma_\cF (\LL)$ is nonzero.
\end{enumerate}

\begin{remark}
  If we assume that the quotient map $\pi:M\to M/\cF$ is {\em proper}
  then it places very severe restrictions on what the connected
  components of the leaves of $\cF$ can be: they have to be compact
  tori.  See Duistermaat \cite{Duis}, for example.
In particular the fact that compact leaves of the polarization of a
punctured plane turned out to be circles (i.e., one dimensional tori)
should come as no surprise (q.v.\ Example~\ref{ex:3.punct}).
More generally the leaves of a Lagrangian fibration are open subsets
of quotients of the form $V/\Gamma$ where $V$ is a finite dimensional
real vector spaces and $\Gamma \subset V$ a discrete subgroup. That
is, $V/\Gamma \simeq (\R^k/\Z^k)\times \R^l$ for some $k$, $l$ with $k+
l = \dim V$.

The issue with existence of nonzero parallel sections reduces to the
holonomy of the connection being trivial along the leaves.  Since the
curvature of the connection vanishes identically on each leaf, the
obstruction to the existence of nonzero parallel (polarized) sections
lies in the representations of the fundamental groups of the leaves.
This is why it is not uncommon for the fundamental groups of the
leaves to be assumed away.  Fortunately there are examples of fibrations
with simply connected leaves that are slightly more general than the
cotangent bundles.  They are the so-called ``twisted cotangent
bundles'' and amount to the following.  Let $Q$ be a manifold with an
integral closed two-form $\tau$ (which may be degenerate) and let $M =
T^*Q$.  It is not hard to check that the two-form $\omega = \pi^*\tau
+\omega_{T^*Q}$ is symplectic.  Here $\pi:T^*Q \to Q$ is the canonical
projection and $\omega_{T^*Q}$ is the canonical symplectic form on
$T^*Q$ (q.v.\ Example~\ref{ex:2.5}).  The Lagrangian foliation of
$(T^*Q, \omega)$ is provided by the fibers of $\pi$, which are
contractible.
\end{remark}

\begin{definition} Given a polarization $\cF\subset TM$, the space of
  {\sf polarization preserving} functions is the space $C^\infty_\cF
  (M)$ defined by
\[
C^\infty_\cF (M):=
\{ f\in C^\infty (M)\mid [\Xi_f, X] \in \Gamma (\cF) \textrm{ for all }
X\in \Gamma (\cF)\},
\]
where, as before, $\Xi_f$ denotes the Hamiltonian vector field of the
function $f$.
\end{definition}
\begin{remark}
  It is not hard to show that if $f$ is a polarization preserving
  function then the flow of its Hamiltonian vector field $\Xi_f$
  preserves the leaves of the foliation defined by the distribution $\cF$.
\end{remark}

\begin{remark} If $\Xi_f\in \Gamma (\cF)$ then, since $\cF$ is
  involutive, $f\in C^\infty_\cF (M)$.  Using again the fact that
  $\cF$ is Lagrangian, it is not hard to show that if $f = \pi^*h$ for
  some $h\in C^\infty (M/\cF)$ then $\Xi_f \in \Gamma (\cF)$.  In
  particular the space $C^\infty_\cF (M)$ is non-trivial.
\end{remark}

\begin{lemma}
  The subspace $C^\infty_\cF(M)$ of $C^\infty (M)$ is closed under the
  Poisson bracket.
\end{lemma}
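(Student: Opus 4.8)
The plan is to show that if $f, g \in C^\infty_\cF(M)$ then $\{f,g\} \in C^\infty_\cF(M)$, i.e.\ that for every local section $X \in \Gamma(\cF)$ we have $[\Xi_{\{f,g\}}, X] \in \Gamma(\cF)$. The key observation is that the map $f \mapsto \Xi_f$ from $C^\infty(M)$ to vector fields on $M$ is a Lie algebra homomorphism (this is one of the listed properties of the Poisson bracket: $\Xi_{\{f,g\}} = [\Xi_f, \Xi_g]$). Hence
\[
[\Xi_{\{f,g\}}, X] = [[\Xi_f, \Xi_g], X].
\]
The natural tool to handle a double bracket of this shape is the Jacobi identity for the Lie bracket of vector fields:
\[
[[\Xi_f, \Xi_g], X] = [\Xi_f, [\Xi_g, X]] - [\Xi_g, [\Xi_f, X]].
\]

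First I would observe that because $g \in C^\infty_\cF(M)$, the bracket $[\Xi_g, X]$ is again a local section of $\cF$; therefore, applying the hypothesis $f \in C^\infty_\cF(M)$ to this section, $[\Xi_f, [\Xi_g, X]]$ is a local section of $\cF$. By the symmetric argument (using $f \in C^\infty_\cF(M)$ to see $[\Xi_f, X] \in \Gamma(\cF)$, then $g \in C^\infty_\cF(M)$), the term $[\Xi_g, [\Xi_f, X]]$ is also a local section of $\cF$. Since $\Gamma(\cF)$ is closed under the (pointwise) difference of sections, the right-hand side of the Jacobi identity lies in $\Gamma(\cF)$, and therefore so does $[\Xi_{\{f,g\}}, X]$. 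As $X \in \Gamma(\cF)$ was an arbitrary local section, this is exactly the condition that $\{f,g\} \in C^\infty_\cF(M)$.

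The argument is essentially formal once one has the homomorphism property $\Xi_{\{f,g\}} = [\Xi_f, \Xi_g]$ in hand, so the only real subtlety — and the step I would be most careful about — is the bookkeeping with locality: the sections $X$, $[\Xi_f,X]$, $[\Xi_g,X]$ are only defined on the open set where $X$ is, so one should phrase the whole computation over an arbitrary open subset of $M$ and note that the defining condition of $C^\infty_\cF(M)$ is local in nature (it suffices to check it on local sections). No other obstacle is anticipated; the Jacobi identity and the closure of $\Gamma(\cF)$ under $\R$-linear combinations do all the work.
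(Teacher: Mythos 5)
Your proof is correct and follows essentially the same route as the paper's: invoke the Lie algebra homomorphism property $\Xi_{\{f,g\}}=[\Xi_f,\Xi_g]$, expand the resulting double bracket against $X\in\Gamma(\cF)$ via the Jacobi identity, and then apply the defining property of $C^\infty_\cF(M)$ for $f$ and for $g$ to each of the two resulting terms. The only differences are cosmetic (sign/ordering in the Jacobi identity), and if anything your justification of why each term lands in $\Gamma(\cF)$ --- applying the $C^\infty_\cF$ hypothesis twice in succession rather than appealing to involutivity of $\cF$ --- is slightly tighter than the paper's wording.
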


\begin{proof}
Suppose $f,g\in C^\infty_\cF(M)$ and $X\in \Gamma (\cF)$.  Then
\[
\Xi_{\{f,g\}} = [\Xi_f, \Xi_g].
\]
Hence
\[ [X, \Xi_{\{f,g\}}] = [X,[\Xi_f, \Xi_g]] = [[X, \Xi_f], \Xi_g] +
[\Xi_f, [X, \Xi_g]]
\]
where the second equality hold by the Jacobi identity.  Since $[X,
\Xi_f], [X, \Xi_g]\in \Gamma (\cF)$ by assumption, and since $\Gamma
(\cF)$ is a subspace of $\Gamma (TM)$ that is closed under brackets,
we have $[X, \Xi_{\{f,g\}}]\in \Gamma (\cF)$.
\end{proof}

\begin{lemma} If $f\in C^\infty_\cF (M)$ is a polarization-preserving
  function then the operator $Q_f$ defined by the prequantization map
  $Q:C^\infty (M) \to \End (\Gamma(\LL))$ preserves the space
  $\Gamma_\cF (\LL)$ of $\cF$ polarized sections, the sections
  covariantly constant along $\cF$.

Hence we get a map of Lie algebras
\[
Q: C_\cF^\infty (M) \to \End (\Gamma_\cF(\LL)).
\]
\end{lemma}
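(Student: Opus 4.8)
The plan is to verify directly that $Q_f$ sends a polarized section to a polarized section, by taking an arbitrary $s\in\Gamma_\cF(\LL)$ and an arbitrary local section $X\in\Gamma(\cF)$ and showing that $\nabla_X(Q_fs)=0$. Using the definition $Q_fs=\nabla_{\Xi_f}s-2\pi\sqrt{-1}\,f\cdot s$ and the fact that $\nabla_Xs=0$, the Leibniz rule collapses the second term: $\nabla_X(2\pi\sqrt{-1}\,f s)=2\pi\sqrt{-1}\,\bigl(X(f)\,s+f\nabla_Xs\bigr)=2\pi\sqrt{-1}\,X(f)\,s$. So everything reduces to computing $\nabla_X\nabla_{\Xi_f}s$.

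For that term I would invoke the definition of the curvature to rewrite
\[
\nabla_X\nabla_{\Xi_f}s=\nabla_{\Xi_f}\nabla_Xs+\nabla_{[X,\Xi_f]}s+R^\nabla(X,\Xi_f)s .
\]
The first summand on the right vanishes because $\nabla_Xs=0$. The key point --- and the only place the hypothesis $f\in C^\infty_\cF(M)$ is used --- is that $[X,\Xi_f]=-[\Xi_f,X]\in\Gamma(\cF)$ by the very definition of a polarization-preserving function, so $\nabla_{[X,\Xi_f]}s=0$ as well. The surviving summand is the curvature term, and since $\frac{1}{2\pi\sqrt{-1}}R^\nabla=\omega$, together with the identity $\omega(X,\Xi_f)=-\omega(\Xi_f,X)=df(X)=X(f)$ coming from $\iota(\Xi_f)\omega=-df$, it equals $2\pi\sqrt{-1}\,X(f)\,s$. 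This exactly cancels the contribution of the zeroth-order part computed above, giving $\nabla_X(Q_fs)=0$; hence $Q_fs\in\Gamma_\cF(\LL)$.

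For the final assertion I would combine this invariance with two facts already in hand: that $Q\colon C^\infty(M)\to\End(\Gamma(\LL))$ is a homomorphism of Lie algebras (i.e.\ $Q_{\{f,g\}}=[Q_f,Q_g]$), and that $C^\infty_\cF(M)$ is a Lie subalgebra of $C^\infty(M)$, being closed under the Poisson bracket. Restricting the domain of $Q$ to $C^\infty_\cF(M)$ and its codomain to the now-invariant subspace $\Gamma_\cF(\LL)$ produces the desired Lie algebra map $Q\colon C^\infty_\cF(M)\to\End(\Gamma_\cF(\LL))$.

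I do not expect a genuine obstacle here: the whole argument is a single application of the curvature identity. The only thing requiring care is conceptual rather than technical --- recognizing that the polarization-preserving condition is precisely what is needed to discard the $\nabla_{[X,\Xi_f]}s$ term, and that the remaining curvature term is tuned to annihilate the multiplication part of $Q_f$. One should also double-check the sign and normalization bookkeeping, since the cancellation hinges on the matching constants $2\pi\sqrt{-1}$ appearing both in the definition of $Q_f$ and in $R^\nabla=2\pi\sqrt{-1}\,\omega$.
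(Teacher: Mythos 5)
Your proof is correct and follows essentially the same route as the paper: reduce to showing $\nabla_X(Q_fs)=0$, expand $\nabla_X\nabla_{\Xi_f}s$ via the curvature identity, kill the $\nabla_{\Xi_f}\nabla_X s$ and $\nabla_{[X,\Xi_f]}s$ terms using the polarized-section and polarization-preserving hypotheses respectively, and observe that the surviving term $R^\nabla(X,\Xi_f)s=2\pi\sqrt{-1}\,\omega(X,\Xi_f)s=2\pi\sqrt{-1}\,X(f)\,s$ cancels the zeroth-order contribution. You also spell out the final Lie-algebra assertion (restriction of a Lie-algebra homomorphism to an invariant subalgebra and invariant subspace), which the paper's proof leaves implicit.
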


\begin{proof}
Note first that for any vector field $X\in \Gamma(TM)$ and any function $f\in C^\infty (M)$
\[
X(f) = df(X) = (\iota(\Xi_f)\omega) (X)  = \omega (X, \Xi_f).
\]
To prove the lemma we need to show that 
\[
\nabla _X (Q_f s) = 0
\]
for all vector fields $X\in \Gamma(\cF)$, all functions $f\in C^\infty _\cF (M)$ 
and all polarized sections  $s\in \Gamma_\cF (\LL)$.
Now
\[
\nabla_X (\nabla _{\Xi_f} s - 2\pi\sqrt{-1} fs) = 
\nabla _X(\nabla _{\Xi_f} s ) - 2\pi\sqrt{-1}X(f) \, s - 2\pi\sqrt{-1} f\nabla _X s.
\]
Note that by assumption $2\pi\sqrt{-1} f\nabla _X s=0$.  By definition
of curvature,
\[
\nabla _X(\nabla _{\Xi_f} s )= \nabla _{\Xi_f}(\nabla _X s ) + \nabla_{[X, \Xi_f]}s 
+ R^\nabla (X, \Xi_f)s.
\]
By assumption on $X$, $f$ and $s$, the first two terms are 0.  Also,
by definition of the connection
\[
R^\nabla = 2\pi \sqrt{-1} \omega.
\]
We conclude that 
\[
\nabla _X(\nabla _{\Xi_f} s ) = 0+ 0 + 2\pi \sqrt{-1} \omega (X, \Xi_f)s.
\]
Putting it all together we see that
\[
\nabla _X (Q_f s) = 2\pi \sqrt{-1} \omega (X, \Xi_f) s - 2\pi
\sqrt{-1} X(f)\, s = 2\pi \sqrt{-1} (\omega (X, \Xi_f)-\omega (X,
\Xi_f))s =0.
\]
\end{proof}

We would now like to define an inner product on the space $\Gamma_\cF
(\LL)$ of polarized sections.  If the fibers of $\pi: M\to M/\cF$ are
compact, then as before we can define an inner product on a subspace
of $\Gamma_\cF (\LL)$ consisting of square integrable sections ---
c.f. Definition~\ref{def.square-integrable} and the subsequent
discussion.  The completion of this space would give us the desired
Hilbert space.  However, as we have seen in Example~\ref{ex:3.punct},
the space of (smooth) polarized section can be 0.  This is not just an
accident of the particular example, but is fairly typical, since
fibers of proper Lagrangian fibrations are tori.  The solution to this
problem --- the lack of nonzero smooth polarized sections --- is to
consider distributional polarized sections.  We will not say anything
further on this topic in these notes.  A curious reader may consult
the discussion of distributional sections in \cite{Sniatycki}.

If the leaves of the polarization on a symplectic manifold $(M,
\omega)$ are not compact (as is the case of the vertical polarization
on a cotangent bundle $T^*N$) then none of the polarized sections are
square integrable with respect to the symplectic volume form
$\omega^m$ ($m= \frac{1}{2}\dim M)$.  On the other hand, the Hermitian inner
product $\langle s, s'\rangle$ of two polarized sections $s,s' \in
\Gamma _\cF (\LL)$ is constant along the fibers of the submersion
$\pi: M\to M/\cF$, hence descends to a function on the leaf space
$M/\cF$.  Thus it is tempting to push the function $\langle s,
s'\rangle$ down to $M/\cF$ and integrate it over the leaf space.  The
problem is that the leaf space $M/\cF$ has no preferred measure or
volume.  For instance suppose $(M,\omega)$ is a 2-dimensional
symplectic vector space $(V, \omega_V)$.  Here we think of $\omega_V$
as a constant coefficient differential form.  Then any line $\ell
\subset V$ defines a polarization whose space of leaves is the
quotient vector space $V/\ell$.  While the vector space $V/\ell$ is
one dimensional in this example, and thus isomorphic to the real line
$\R$, there is no preferred identification of $V/\ell$ with $\R$ and
no preferred measure on $V/\ell$.\\

Let us recap where we are.  We have an integral symplectic manifold
$(M,\omega)$, a prequantum line bundle $\LL\to M$ with connection
$\nabla$ whose curvature is $2\pi\sqrt{-1}\omega$, a Lagrangian
foliation $\Ff$ of $M$ so that the space of leaves $M/\Ff$ is a
Hausdorff manifold, the quotient map $\pi: M\to M/\Ff$ is a fibration
and the holonomy representation of the fundamental groups of the
leaves with respect to the connection $\nabla$ are trivial.  In this
case the prequantum line bundle $\LL\to M$ descends to a Hermitian
line bundle $\LL/\Ff\to M/\Ff$.  We now consider the new complex line
bundle $\LL/\Ff\otimes |T(M/\Ff)|^{1/2}$, the bundle $\LL/\Ff$ twisted
by the bundle of half densities on $M/\Ff$ (q.v.\
Definition~\ref{def:dens-man}).    We have an isomorphism 
\begin{equation}
\Gamma(\LL/\Ff\otimes |T(M/\Ff)|^{1/2}) \simeq \Gamma
(\LL/\Ff)\otimes_{C^\infty (M/\Ff)} \Gamma(|T(M/\Ff)|^{1/2})
\end{equation}
of ${C^\infty (M/\Ff)}$ modules.  

It makes sense to talk about the sections of $\LL/\Ff\otimes
|T(M/\Ff)|^{1/2}$ being square integrable, and it makes sense to
define a sesquilinear pairing of two square integrable sections.
This is done as follows. As we observed in Remark~\ref{rmk:B.otimes},
the bundle $|T(M/\Ff)|^{1/2}$ of half densities is trivial, that is,
it has a nowhere zero global section.  Hence any section of
$\LL/\Ff\otimes |T(M/\Ff)|^{1/2}$ is of the form $s\otimes \mu$ where
$s\in \Gamma(\LL/\Ff) \simeq \Gamma_\Ff (\LL)$ and $\mu$ is a $1/2$
density on $M/\Ff$.  Now given a polarized section $s\in \Gamma_\Ff
(\LL)$ and a $1/2$ density $\mu$ on $M/\Ff$ we can form a 1 density
\[
\langle s, s\rangle \bar{\mu} \mu = ||s||^2 |\mu|^2
\]
on $M/\Ff$.  Moreover the map
\[
(\LL/\Ff)\otimes_{C^\infty (M/\Ff)} \Gamma(|T(M/\Ff)|^{1/2}) \to 
|M/\Ff|^1,\quad s\otimes \mu \mapsto ||s||^2 |\mu|^2
\]
is well-defined.  Here, as in Appendix~\ref{app:densities},
$|M/\Ff|^1$ denotes the space of 1-densities on the manifold $M/\Ff$,
which is the space of sections of the bundle $|T(M/\Ff)|^1$ of
1-densities.
It is not hard to show that the space of compactly supported polarized
sections of the twisted prequantum line bundle forms a vector space
with a Hermitian inner product given by
\[
\langle\langle s_1\otimes \mu_1, s_2 \otimes \mu_2\rangle \rangle := 
\int _{M/\Ff} \langle s_1, s_2\rangle \bar{\mu}_1 \mu_2 .
\]
The completion of this complex vector space with respect to
$\langle\langle \cdot, \cdot\rangle\rangle$ is the intrinsic quantum
space associate with the data $(\LL\to M, \nabla, \Ff, \langle \cdot,
\cdot \rangle)$.  A bit more effort gives a representation of the Lie
algebra $C^\infty_\Ff (M)$ on this quantum space.  See
\cite{SimmsWoodhouse},\cite{Sniatycki} or \cite{Woodhouse}.

\section{Prequantization of differential cocycles}

Let $L\vectb M$ be a complex line bundle, $\ipd$ a Hermitian inner
product and $\nabla$ a Hermitian connection. In Lecture 1 we were
trying to find a section of the curvature map
\begin{equation}\label{eq:5.1}
  curv:(L\vectb M, \ipd,\nabla)\longmapsto 
\frac{1}{\sqrt{-1}}R^\nabla\in\Omega^2(M),
\end{equation}
which is neither 1-1 nor onto.  Recall that a Hermitian line bundle
with connection over a manifold $M$ defines a principal $S^1$ bundle
with a connection 1-form and conversely.  Since 1-forms obviously pull
back, it will be convenient for us to replace the problem of finding
the section of \eqref{eq:5.1} by the problem of finding a section of
\begin{equation}
  curv: (S^1 \to P\to M, A\in \Omega^1(P, \R)^{S^1}) \to 
F_A \in \Omega^2 (M, \R),
\end{equation}
where $F_A$ denotes the curvature of the connection 1-form $A$.  Note
that here we think of the circle $S^1$ as $\R/\Z$ and not as the group
$U(1)$ of unit complex numbers.  Hence now our connection 1-forms are
$\R$-valued.


For a given manifold $M$, the collection of all principal
$S^1$-bundles with connection 1-forms over $M$ forms a category,
which we will denote by $DBS^1(M)$.\footnote{This is not a standard
  notation.  The $BS^1(M)$ is supposed to remind the reader of the
  classifying space $BS^1$, maps into which classify principal $S^1$
  bundles.  The $D$ stands for ``differential,'' i.e., the connection.
}
More precisely the objects of the category  $DBS^1(M)$ are pairs
$(P, A)$, where $P$ is a principal $S^1$ bundle over $M$ and $A\in \Omega^1(P)$ is a connection 1-form on $P$.  Given two objects $(P,A)$, $(P', A')$ of $DBS^1 (M)$ the set $\Hom((P,A),(P',A'))$ of morphisms between them is defined by 
\[
\Hom((P,A),(P',A'))=\{\phi:P\rightarrow P'\mid \phi\textrm{ is } S^1 \textrm{ 
equivariant, } \phi \textrm{ induces identity on }M,\
\phi^*A'=A\}.
\]
\noindent
Notice that all morphisms are invertible, so the category $DBS^1(M)$
is a groupoid by definition (see Appendix~\ref{sec:category}). 

Our solution of non-invertability of the curvature map proceeds along
the following lines.  We will construct a category $\Dd\Cc(M)$ of {\sf
  differential cocycles}, so that:
\begin{enumerate}
\item  The objects of $\Dd\Cc(M)$ involve differential forms.

\item There is an equivalence of categories
  $DBS^1(M)\overset{DCh}\longrightarrow\Dd\Cc(M).$
\end{enumerate}
Since equivalences of categories are invertible (up to natural
isomorphisms) this will achieve our objective.\footnote{A reader who
  is not fluent in category theory may wish at this point to
  contemplate Example~\ref{ex:equiv-of-cats} of two rather different
  looking but equivalent categories. } \, The construction of the
category $\Dd\Cc(M)$ was carried out as a toy example in a paper by
Hopkins and Singer \cite{HS}, which is where we copy the definition
from.  In constructing the functor $DCh$ we will follow
\cite{LerMalk}.  The construction of $\Dd\Cc$ requires several steps.

\subsection*{Step
  1:  Categories from cochain complexes} 

Let $A^\bullet=\{A^\bullet\overset{d}\rightarrow A^{\bullet+1}\}$ be a
cochain complex of abelian groups. For example,
$A^\bullet=\Omega^\bullet(M)$, the complex of differential forms on a
manifold $M$.  For each index $n\geq 0$ there is a category
$\Hh^n(A^\bullet)$ with the set $\{z\in A^n|dz=0\}$ of cocycles of
degree $n$ being its set of objects.  The set of morphisms
$\Hom(z,z')$ for two cocycles $z,z'$ is defined by
\[
\Hom(z,z')=\{(z,[b]) \in \ker (d:A^n\to A^{n+1}) \times A^{n-1}/
dA^{n-2} \mid z'=z+db\} \simeq \{[b] \in A^{n-1}/ dA^{n-2} \mid
z'=z+db\}
\]
The composition of morphisms is
addition $+$:
\[
(z',[b']) \circ (z,[b]) = (z,[b+b'']).
\]
The category $\Hh^n(A^\bullet)$ is a groupoid with the set
$\pi_0(\Hh^n(A^\bullet)$ of equivalence classes of objects being the
cohomology group $H^n(A^\bullet).$ The category $\Hh^n(A^\bullet)$ may
also be viewed as an action groupoid for the action of $A^{n-1}/
dA^{n-2}$ on $\ker (d:A^n\to A^{n+1})$ by way of $d: A^{n-1} \to A^n$.

Next suppose we have a contravariant functor from the category $\Man$
of manifolds and smooth maps to the category $\CoChain$ of cochain
complexes (such a functor is often called a presheaf of cochain
complexes):
\[
A^\bullet: {\Man}^{op}\to  \CoChain, \quad M\mapsto A^\bullet(M).
\]
An example to keep in mind is the functor that assigns to each
manifold the complex of differential forms and to a map of manifolds
the pullback of differential forms.  Then each smooth map
$f:M\rightarrow N$ between manifolds  induces a functor
\[
\Hh^n(f):\Hh^n(A^\bullet(N))\rightarrow \Hh^n(A^\bullet(M))
\]
with
\[
\xymatrix@=3em{z' & z\ \ \ \ \ar[l]^{(z,[b])}\ar@{|-{>}}[r]& \ \ \ \
  (f^*z' & f^*z)\ar[l]^>>>>>>{(f^*z,[f^*b])}}.
\]

\subsection*{Step 2: The presheaf of differential cocycles.}

We need to introduce more notation. Denote by
$C^\bullet(M,\Z)$ the complex of ($C^\infty$) singular integral
cochains on a manifold $M$ and by $C^\bullet(M,\R)$ the complex of
($C^\infty$) singular real-valued cochains.  We have maps of complexes
\[
C^\bullet(M,\Z) \to C^\bullet(M,\R)\quad \textrm{ and }\quad \Omega^\bullet
(M) \to C^\bullet(M,\R).  
\]
The second map sends a differential $k$-form $\sigma$ to a functional
on the space of real $k$-chains: the value of this functional on a
chain $s$ is the integral $\int_s \sigma$.  We would like to find a
complex $DC^\bullet(M)$ that fits into the diagram
\[
\xymatrix{DC^\bullet(M)\ar@{--{>}}[r]\ar@{--{>}}[d] &
  \Omega^\bullet(M)\ar[d] \\
  C^\bullet(M,\Z) \ar[r] & C^\bullet(M,\R)}.
\]
 We define this complex as follows:
\[
DC^k(M)=\{(c,h,\omega)\in C^k(M,\Z)\times C^{k-1}(M,\R)\times\Omega^k(M)
\mid \omega=0\ for\ k<2\}
\]
with the differential $\tilde{d}$ defined by
\[
\tilde{d}(c,h,\omega)=(\delta c,\omega-c-\delta h, d\omega).
\]
Note that in defining $\tilde{d}$ we suppressed the maps
$C^\bullet(M,\Z) \to C^\bullet(M,\R)$ and $\Omega^\bullet (M) \to
C^\bullet(M,\R)$.  In particular, when we think of a differential
$k$-form $\tau$ as a real cochain, we write its differential as $\delta
\tau$.  That is, for any $k+1$ chain $\gamma$ we have
\[
\delta \tau (\gamma) = \int _\gamma d\tau.
\]
It is not hard to show that
\[
\tilde{d}\circ \tilde{d}=0.\]
Indeed, 
\[
\tilde{d}(\tilde{d}(c,h,\omega)) = \tilde{d}(\delta c,\omega -c -
\delta h,d\omega)= (\delta^2 c, d\omega - \delta c -\delta (\omega -c
-\delta h), d^2 \omega) = 
(0,0,0).
\]

We are now ready to define the category $\Dd\Cc(M)$ of differential
cocycles by setting 
\[
\Dd\Cc(M):=\Hh^2(DC^\bullet(M)).
\]
By construction the set of objects $\Dd\Cc(M)_0$ of this category is
\[
\Dd\Cc(M)_0=\{(c,h,\omega)\in C^2(M,\Z)\times C^1(M,\R)\times \Omega^2(M)
\mid \delta c=0,\ d\omega=0,\ \omega=c-\delta h\}
\]
 and the morphisms are defined by
\[
\Hom((c,h,\omega),(c',h',\omega'))
=\left\{[e,k,0] \, \left| \,
e\in C^1(M,\Z),
\ k\in C^0(M,\R)\,
\textrm{ and }\, \begin{array}{c} c'-c=\delta e\\ h'-h=-\delta k-e\\ \omega-\omega'=0 \end{array}\right. \right\}.
\]

\noindent
The following theorem then holds (it is presented as a warm-up example
in \cite{HS}):

\begin{Theorem}
 For each manifold $M$ there exists an equivalence of categories
\[ 
DCh_M:DBS^1(M)\rightarrow\Dd\Cc(M)
\]
with $DCh_M(P,A)=(c(P,A), h(P,A), F_A)$ for each principal circle
bundle with connection $(P, A)$.  Here as before $F_A$ denotes the
curvature of a connection $A$.
\end{Theorem}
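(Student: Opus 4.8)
The plan is to build the functor $DCh_M$ explicitly out of local connection data, and then to use the fact that a functor between groupoids is an equivalence as soon as it is bijective on isomorphism classes of objects and induces an isomorphism on the automorphism group of each object. (Both $DBS^1(M)$ and $\Dd\Cc(M)$ are groupoids --- for the latter this was observed in Step~1. Granting those two conditions, the induced map on any $\Hom$-set between isomorphic objects is an equivariant map of nonempty torsors over a group isomorphism, hence a bijection, and the case of objects that are not isomorphic is covered by injectivity on isomorphism classes.) So after producing $DCh_M$ I would verify three things: essential surjectivity, injectivity on isomorphism classes, and the statement about automorphism groups.

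\emph{Constructing $DCh_M$.} Fix a good open cover $\{U_i\}$ of $M$ with local sections $s_i\colon U_i\to P$. These produce transition functions $g_{ij}\colon U_{ij}\to S^1=\R/\Z$ with $s_j=s_ig_{ij}$ and local connection $1$-forms $A_i:=s_i^*A\in\Omega^1(U_i)$ obeying $A_j-A_i=dg_{ij}$ on $U_{ij}$ and $dA_i=F_A|_{U_i}$, so that $F_A\in\Omega^2(M)$ is the descended curvature. Lifting each $g_{ij}$ to an $\R$-valued $\tilde g_{ij}$ on the contractible set $U_{ij}$ produces an integral \v{C}ech $2$-cocycle $n_{ijk}:=\tilde g_{ij}+\tilde g_{jk}-\tilde g_{ik}$ representing $c_1(P)$. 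I would then assemble the data $(n_{ijk},\tilde g_{ij},A_i,F_A)$ as a cocycle in the \v{C}ech--de~Rham picture of the cover and, choosing a partition of unity subordinate to $\{U_i\}$, push it through the standard \v{C}ech-to-singular comparison to extract an integral singular $2$-cochain $c=c(P,A)$ and a real singular $1$-cochain $h=h(P,A)$ with $(c,h,F_A)\in\Dd\Cc(M)_0$. A morphism $\phi\colon(P,A)\to(P',A')$ pushes the $s_i$ to sections $\phi\circ s_i$ of $P'$, and comparing the two families of local data produces the integral $1$-cochain $e$ and real $0$-cochain $k$ of a morphism $[e,k,0]\colon DCh_M(P,A)\to DCh_M(P',A')$. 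The hard part of the whole proof is here: checking that these recipes are independent --- up to a canonical natural isomorphism --- of the cover, the sections, the lifts, and the partition of unity, so that $DCh_M$ really is a functor. This is the cochain-level bookkeeping carried out in \cite{LerMalk} (after the warm-up in \cite{HS}), and I would import it. Everything below is comparatively routine homological algebra with the exact sequences relating $H^\bullet(M;\Z)$, $H^\bullet_{dR}(M)$ and $H^\bullet(M;\R/\Z)$.

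\emph{Essential surjectivity.} Given an object $(c,h,\omega)$ of $\Dd\Cc(M)$, the class $[c]\in H^2(M;\Z)$ is the Chern class of a principal $S^1$-bundle $P\to M$ (line bundles are classified by $H^2(M;\Z)$). The defining relation among $c$, $h$, $\omega$ exhibits the form $\omega$, regarded as a real singular cochain, as cohomologous to $c$, so $[\omega]_{dR}\in H^2_{dR}(M)$ equals the image of $[c]=c_1(P)$; hence $P$ carries a connection with curvature exactly $\omega$ --- take any connection $A_0$, write $F_{A_0}-\omega=d\beta$ with $\beta\in\Omega^1(M)$, and set $A:=A_0-\pi^*\beta$. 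Then $DCh_M(P,A)=(c',h',\omega)$ with $[c']=[c]$, and the residual discrepancy in the $h$-component defines a class in $H^1_{dR}(M)$, canonical modulo the image of $H^1(M;\Z)$, which is the obstruction to an isomorphism $(c',h',\omega)\cong(c,h,\omega)$ in $\Dd\Cc(M)$. Replacing $A$ by $A-\pi^*\gamma$ for a closed $1$-form $\gamma$ leaves $P$ and $F_A=\omega$ untouched but shifts this class by $[\gamma]_{dR}$; a suitable $\gamma$ kills it, so $DCh_M(P,A)\cong(c,h,\omega)$.

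\emph{Injectivity on isomorphism classes and the automorphism groups.} If $DCh_M(P,A)\cong DCh_M(P',A')$, then $c_1(P)=[c]=[c']=c_1(P')$ gives $P\cong P'$; transporting $A'$ across such an isomorphism makes the curvatures agree (both equal the common third component), and vanishing of the obstruction class above makes the two connections gauge-equivalent, so $(P,A)\cong(P',A')$. For automorphisms: an automorphism of $(P,A)$ in $DBS^1(M)$ is multiplication by a smooth $\lambda\colon M\to S^1$ with $d\lambda=0$, i.e.\ a locally constant map, so $\operatorname{Aut}_{DBS^1(M)}(P,A)$ is the group of locally constant maps $M\to S^1$. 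On the other side $\operatorname{Aut}_{\Dd\Cc(M)}(c,h,\omega)=H^1(DC^\bullet(M))$, and a direct computation with the definition of $DC^\bullet$ identifies this with $\{(e,k)\in Z^1(M;\Z)\times C^0(M;\R)\mid \delta k=-e\}/\{(\delta m,-m)\mid m\in C^0(M;\Z)\}$, which in turn is identified with the locally constant maps $M\to\R/\Z=S^1$ via $(e,k)\mapsto k\bmod\Z$. A direct check shows $DCh_M$ carries the automorphism ``multiply by $\lambda$'' to the class of $(0,\tilde\lambda,0)$ for any lift $\tilde\lambda$ of $\lambda$, hence induces exactly this isomorphism of automorphism groups. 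With bijectivity on isomorphism classes and on automorphism groups in hand, $DCh_M$ is an equivalence of categories, and $DCh_M(P,A)=(c(P,A),h(P,A),F_A)$ by construction.
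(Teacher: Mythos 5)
Your strategy is genuinely different from the paper's, and the criterion you invoke is sound, but the plan has a gap at its center that the reference you lean on does not fill.

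The paper follows \cite{LerMalk} and argues at the level of presheaves of groupoids over $\Man$: the functor $DCh$ is written down explicitly only on the sub-presheaf $DBS^1_{triv}$ of \emph{trivial} bundles with connection, where it is a one-line formula $(M\times S^1,a+d\theta)\mapsto(0,a,da)$; it is then checked to be fully faithful for every $M$ and essentially surjective when $M$ is contractible; and since both $DBS^1(\cdot)$ and $\Dd\Cc(\cdot)$ are stacks, the universal property of stackification extends $DCh$ uniquely to all of $DBS^1(\cdot)$ and promotes ``local equivalence'' to ``global equivalence.'' What that route buys is that no global cochain bookkeeping is ever done: the explicit construction is only needed in the trivial case, where it is elementary, and descent handles the rest. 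The price is that one must develop or cite the theory of stacks. You instead fix $M$, propose to build $DCh_M$ directly from \v{C}ech data on a good cover via a \v{C}ech-to-singular comparison, and then verify equivalence head-on. Your criterion --- bijective on isomorphism classes together with an isomorphism on each automorphism group --- is a correct reformulation of ``fully faithful and essentially surjective'' for functors between groupoids, and your computation of the two automorphism groups and your outline of essential surjectivity are sound.

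The gap is the step you yourself flag as the hard part and propose to import: showing that the \v{C}ech-to-singular recipe gives an honest functor, independent of the cover, the local sections, the integer lifts $\tilde g_{ij}$, and the partition of unity. This cannot actually be imported from \cite{LerMalk}, since that paper avoids precisely this bookkeeping by constructing the functor only on trivial bundles and then stackifying; a direct construction of a strict functor along the lines you describe requires additional choices (for instance, working over a cofinal directed system of good covers, or fixing a universal cover functorially) that your sketch does not address. Relatedly, your essential-surjectivity argument leans on the unverified assertion that replacing $A$ by $A-\pi^*\gamma$ shifts the resulting $h$-cochain by something cohomologous to $-\gamma$; this is exactly what the local formula $h=a$ suggests, but it has to be checked against whatever global definition of $h(P,A)$ you ultimately adopt. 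None of this is fatal --- the overall strategy is viable and in some ways more concrete than the paper's --- but as written the proof defers its central construction to a source that proves the theorem by a different method.
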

\begin{remark}
  The functor that assigns a prequantum line bundle to a differential
  cocycle is the ``homotopy inverse" of the functor $DCh_M$.
\end{remark}

Finally here is an outline of an argument as to why this theorem is
true and how you would go about writing down the functor $DCh$.  I
will be following the presentation in \cite{LerMalk}.  Here are the
main ideas:

\begin{enumerate}
\item  Do it for all manifolds at once.

\item Restate the theorem for presheaves of categories: 

\begin{quote}There exists a
  morphism $DCh_{(\cdot )}:DBS^1(\cdot)\rightarrow\Dd\Cc(\cdot)$ of
  presheaves of categories with the desired properties. 
\end{quote}
Here the
  $\cdot$ is a place holder for a manifold.

\item Write down the functor $DCh$ explicitly for the sub-presheaf of
  trivial bundles $DBS^1_{triv}(\cdot)\subset DBS^1(\cdot)$.  This is
  not hard.  The set of objects of the presheaf $DBS^1_{triv}(\cdot)$
  on a manifold $M$ is the set
\[
DBS^1_{triv}(M)_0=\{(M\times S^1,a+d\theta)|a\in\Omega^1(M)\},
\]
The set of morphisms is 
\[
\Hom((M\times S^1,a+d\theta),(M\times S^1,a'+d\theta))=\{f:M\rightarrow S^1|a=a'+f^*d\theta\}.
\]
For each manifold $M$ we therefore define the functor $DCh_M: DBS^1_{triv}(M) \to  \Dd\Cc(M)$ as follows: on objects
\[
DCh_M (M\times S^1,a+d\theta) =(0,a,da),
\]
on morphisms
\[
DCh_M (f:M\rightarrow S^1) = [\delta(\tilde f) -f^*d\theta,\tilde f,0],
\]
where $\tilde f:M\rightarrow\R$ is {\sf any} lift of $f$ (not
necessarily continuous). We think of $\tilde f$ as a real $0$-cochain:
\[
\tilde f\left(\sum\limits_{p\in M}n_p\right)=\sum n_p\tilde f(p)
\]
for any zero chain $\sum\limits_{p\in M}n_p$.
\end{enumerate}

The rest of the argument uses the following facts (see \cite{LerMalk}
for details):
\begin{enumerate}
\item The functor 
 $DCh_M:DBS^1_{triv}(M)\rightarrow\Dd\Cc(M)$ is bijective on $\Hom$'s 
(that is,  it is a fully faithful functor).

\item If $M$ is contractible (e.g. an open ball) then $DCh_M$ is
  essentially surjective.  Of course, for a {\sf general} manifold $M$
  not all bundles over $M$ are trivial, but they are all {\sf locally}
  trivial.

\item Any bundle with a connection can be glued together out of the
  trivial ones.

\item Differential cochains glue like bundles. Showing this requires work.
\end{enumerate}
These last two facts amount to saying that the two presheaves
$DBS^1(\cdot)$ and $\Dd\Cc(\cdot)$ are stacks over $\underline{Man}$,
the category of Manifolds. 

The presheaf $DBS^1_{triv}$ is {\sf not} a stack since gluing a bunch
of trivial bundles together need not result in a trivial bundle.
There is an operation on presheaves of categories called
stackification. This is a version of sheafification for sheaves of
categories. The stackification of $DBS^1_{triv}$, not surprisingly, is
$DBS^1$.  Therefore, by the universal property of stackifications
there is a unique functor $DCh: DBS^1 (\cdot) \to \Dd\Cc (\cdot)$
making  the following diagram 
\[
\xymatrix{DBS^1(\cdot)\ar@{--{>}}[rd]^{\exists !} & \\& \Dd\Cc (\cdot)
\\ DBS^1_{triv}(\cdot)\ar@{^{(}-{>}}[uu]\ar[ru]_{DCh} & }
\]
commute.  The resulting functor
\[ 
DCh: DBS^1 (\cdot) \to \Dd\Cc (\cdot)
\]
is an equivalence of categories since it is an equivalence of
categories locally.

\appendix

\section{Elements of  category theory}\label{sec:category}

\subsection{Basic notions}
\mbox{}\\[-8pt]

\noindent
We start by recalling the basic definitions of category theory, mostly
to fix our notation.  This appendix may be useful to the reader with
some background in category theory; the reader with little to no
experience in category theory may wish to 
consult a textbook such as \cite{Awodey}.

\begin{definition}[Category]\label{def:category}
  A {\bf category} $\A$ consists of

\begin{enumerate} 
\item A collection\footnote{A collection may be too big to be a set;
    we will ignore the  set-theoretic
    issues this may lead to.}  $\A_0$ of {\em objects}; 

\item For any two objects $a,b\in \A_0$, a set $\Hom_\A (a,b)$ of of
  {\em morphisms} (or {\em arrows});
\item For any three objects $a,b,c\in \A_0$, and any two arrows
  $f\in\Hom_\A(a, b)$ and $g\in\Hom_\A(b,c)$, a {\em composite }
  $g\circ f\in\Hom_\A(a,c)$, i.e., for all triples of objects
  $a,b,c\in\A_0$ there is a {\em composition map}
\[
\circ\colon \Hom_\A(b,c)\times\Hom_\A(a,b)\rightarrow\Hom_\A(a,c),
\]
\[
\Hom_\A(b,c)\times\Hom_\A(a,b) \ni (g,f)\mapsto g\circ f\in \Hom_\A(a,c).
\] 
This composition operation is {\em
associative} and has {\em units}, that is,
\begin{itemize} 
\item[i.] for any triple of morphisms $f\in\Hom_\A(a,b)$,
  $g\in\Hom_\A(b,c)$ and $h\in\Hom_\A(c,d)$ we have
\[ 
h\circ (g\circ f)=(h\circ g)\circ f\,; 
\] 
\item[ii.] for any object $a\in \A_0$, there exists a morphism
  $1_a\in\Hom_\A(a,a)$, called the {\em identity}, which is such that
  for any $f\in\Hom_\A(a,b)$ we have
\[ 
f=f\circ
1_a=1_b\circ f\,.  
\] 
\end{itemize}
We denote the collection of all morphisms of a category $\A$ by $\A_1$:
\[
\A_1 = \bigsqcup _{a,b\in \A_0} \Hom_\A (a,b).
\]
\end{enumerate} 
\end{definition}
\begin{remark}
  The symbol ``$\circ$'' is customarily suppressed in writing out
  compositions of two morphisms.  Thus
\[
gf\equiv g\circ f.
\]
\end{remark}

\begin{example}[Category $\Set$ of sets] 
The collection $\Set$ of all sets forms a category.
  The objects of $\Set$ are sets, the arrows of $\Set$ are ordinary
  maps and the composition of arrows is the composition of maps.
\end{example}

\begin{example}[Category $\Vect$ of vector spaces]
  The collection $\Vect$ of all real vector spaces (not necessarily
  finite dimensional) forms a category.  Its objects are vector spaces
  and its morphisms are linear maps.  The composition of morphisms is
  the ordinary composition of linear maps.
\end{example}

\begin{example} [The category $\M$ of coordinate vector spaces] The
  objects of this category are coordinate vector spaces $0 =
  \R^0,\R^1, \ldots, \R^n \ldots$.  The set of morphism from $\R^m$ to
  $\R^n$ is the set of all $n\times m$ matrices.  The composition of
  morphisms is given by a matrix multiplication.
\end{example}
\begin{remark} For a category $\A$ there are two maps from the
  collection $\A_1$ of its arrows to the collection $\A_0$ of objects
  called {\sf source} and {\sf target} and denoted respectively by $s$
  and $t$.  They are defined by requiring that 
\[
s(f) =a \quad \textrm{and} \quad t(f) = b \quad \textrm{for any  } f\in
\Hom_\A (a,b).
\]
\end{remark}

\begin{definition}\label{def:subcat} 
  A {\em subcategory} $\A$ of a category $\B$ is a collection of some
  objects $\A_0$ and some arrows $\A_1$ of $\B$ such that:
\begin{itemize}
\item For each object $a\in \A_0$, the identity $1_a$ is in $\A_1$;
\item For each arrow $f\in \A_1$ its source and target $s(f), t(f)$
  are in $\A_0$;
\item for each pair $(f,g)\in \A_0 \times \A_0$ of composable arrows
  $a\stackrel{f}{\to}a'\stackrel{g}{\to}a''$ the composite $g\circ f$
is in $\A_1$ as well.
\end{itemize}
\end{definition}
\begin{remark}
Naturally a subcategory is a category in its own right.
\end{remark}

\begin{example}
  The collection $\Finset$ of all finite sets and all maps between
  them is a subcategory of $\Set$ hence a category.
 The collection $\FinVect$ of real finite dimensional vector spaces
  and linear maps is a subcategory of $\Vect$.
\end{example}

\begin{example}
  A subcategory $\FinVect^{iso}$ is defined to have the same objects
  as the category of $\FinVect$.  Its morphisms are {\sf isomorphisms }
  of vector spaces.  Since the composition of two linear isomorphisms
  is an isomorphism $\FinVect^{iso}$ is a subcategory of $\FinVect$.

  Note that for any object $V$ in $\FinVect^{iso}$, that is, for any
  finite dimensional vector space $V$, the set of morphisms
  $\Hom(V,V)$ in $\FinVect^{iso}$ is $GL(V)$, the Lie group of
  invertible linear maps from $V$ to $V$.

  Compare this to the fact that in the category $\FinVect$ the set of
  morphisms $\Hom (V,V)$ is $\End(V)$, the space of {\sf all} linear
  maps from $V$ to itself.
\end{example}

\begin{definition}[isomorphism]
  An arrow $f\in \Hom_\A(a,b)$ in a category $\A$ is an {\em
    isomorphism} if there is an arrow $g\in \Hom_A (b,a)$ with $g\circ
  f = 1_a$ and $f\circ g = 1_b$.  We think of $f$ and $g$ as inverses
  of each other and may write $g = f\inv$.  Clearly $g= f\inv$ is also
  an isomorphism.

  Two objects $a,b\in \A_0$ are {\em isomorphic} if there is an
  isomorphism $f\in \Hom_\A (a,b)$.  We will also say that $a$ is
  isomorphic to $b$.
\end{definition}

\begin{definition}[Groupoid] \label{def:groupoid} A {\em groupoid} is
  a category in which every arrow is an isomorphism.
\end{definition}
\begin{example}
  The category $\FinVect^{iso}$ is a groupoid. 
\end{example}

\begin{definition}[Functor]\label{def:functor}
  A (covariant) {\em functor} $F\colon\A\to\B$ from a category $\A$ to
a category $\B$ is a map on the objects and arrows of $\A$ such that
every object $a\in \A_0$ is assigned an object $Fa\in\B_0$, every
arrow $f\in\Hom_\A(a,b)$ is assigned an arrow $Ff\in\Hom_\B(Fa,Fb)$,
and such that composition and identities are preserved, namely
\begin{equation*}
  F(f\circ g) = Ff\circ Fg,\quad F 1_a = 1_{Fa}.
\end{equation*}
A {\em contravariant} functor $G$ from $\A$ to $\B$ is a map on the
objects and arrows of $\A$ such that every object $a\in \A_0$ is
assigned an object $Ga\in\B_0$, every arrow $f\in\Hom_\A(a,b)$ is
assigned an arrow $Gf\in\Hom_\B(Gb,Ga)$ (note the order reversal),
such that identities are preserved, and the composition of arrows is
reversed:
\[
G (f\circ g) = G(g)\circ G(f)
\] 
for all composable pairs of arrows $f,g$ of $\A$.
\end{definition}

\begin{example} \label{ex:iota}
  There is a natural functor $\iota:\M \to \FinVect$ which is the
  identity on objects and maps an $n\times m$ matrix to the
  corresponding linear map.
\end{example}
\begin{example} The functor
  $(-)^*:\mathsf{FinVect}\to\mathsf{FinVect}$ that takes the duals,
  that is,
  $(V\overset{A}{\longrightarrow}W)\mapsto
(V^*\overset{A^*}{\longleftarrow}W^*)$
  is a contravariant functor.
\end{example}

\begin{remark}
Since functors are maps, functors can be composed.
\end{remark}

\begin{definition}\label{def:full}
A functor $F\colon \A\to \B$ is
\begin{enumerate}
\item {\em full} if $F\colon\Hom_\A(a,a')\to\Hom_\B(Fa,Fa')$
is surjective for all pairs of objects $a,a'\in \A_0$;
\item {\em faithful} if $F\colon\Hom_\A(a,a')\to\Hom_\B(Fa,Fa')$
is injective for all pairs of objects $a,a'\in \A_0$
\item {\em fully faithful} if $F\colon\Hom_\A(a,a')\to\Hom_\B(Fa,Fa')$
  is a bijection for all pairs of objects $a,a'\in \A_0$;
\item {\em essentially surjective} if for any object $b\in \B_0$ there
  is an object $a\in \A_0$ and an isomorphism $f\in \Hom_\B (F(a),
  b)$.  That is, for any object $b$ of $\B$ there is an object $a$ of
  $\A$ so that $b$ and $F(a)$ are isomorphic.
\end{enumerate}
\end{definition}

\begin{example}
  The functor $\iota: \M\to \FinVect$ is fully faithful (since any
  linear map from $\R^n$ to $\R^m$ is uniquely determined by what it
  does on the standard basis) and essentially surjective (since any
  real vector space of dimension $n$ is isomorphic to $\R^n$).
\end{example}

\begin{definition}[Natural Transformation]
  Let $F,G\colon\A\to\B$ be a pair of functors. A {\em natural
    transformation} $\tau\colon F\Rightarrow G$ is a family of
  $\{\tau_a\colon Fa\to Ga\}_{a\in\A_0}$ of morphisms in $\B$, one for
  each object $a$ of $\A$, such that, for any $f\in\Hom_\A(a,a')$, the
  following diagram commutes:
\begin{equation*}
  \xy
(-10, 10)*+{Fa} ="1"; 
(10, 10)*+{Fb} ="2"; 
(-10,-5)*+{Ga}="3";
(10,-5)*+{Gb} ="4";
{\ar@{->}^{Ff} "1";"2"};
{\ar@{->}^{\tau_b} "2";"4"};
{\ar@{->}_{\tau_a} "1";"3"};
{\ar@{->}^{Fg} "3";"4"};
\endxy
\end{equation*}
If each $\tau_a$ is an isomorphism, we say that $\tau$ is a {\em natural
isomorphism} (an older term is {\em natural equivalence}).
\end{definition}

\begin{definition}[Equivalence of
categories]
An {\em equivalence of categories} consists of a pair of functors
\[
F\colon \A\to \B, \quad E\colon \B\to \A
\]
and a pair of natural isomorphisms
\[
\alpha\colon  1_\A \Rightarrow E\circ F \quad \beta\colon  1_\B \Rightarrow F\circ E.
\]
\end{definition}
In this situation the functor $F$ is called {\em the pseudo-inverse}
or the {\em homotopy inverse} of $E$.  The categories $\A$ and $\B$
are then said to be {\em equivalent categories}.

\begin{proposition}\label{prop:eq-of-cats}
  A functor $F\colon \A \to \B$ is (part of) an equivalence of categories if
  and only if it is fully faithful and essentially surjective.
\end{proposition}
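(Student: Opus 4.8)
The plan is to prove the two implications separately: the forward direction is a short diagram chase, while the reverse direction requires an explicit (choice-dependent) construction of a pseudo-inverse.

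First I would dispatch the easy direction. Assume $F$ is part of an equivalence, with pseudo-inverse $E\colon\B\to\A$ and natural isomorphisms $\alpha\colon 1_\A\Rightarrow E\circ F$ and $\beta\colon 1_\B\Rightarrow F\circ E$. Essential surjectivity is immediate: for $b\in\B_0$ the component $\beta_b$ is an isomorphism $b\to F(E(b))$, so $b$ is isomorphic to $F(a)$ with $a=E(b)$. For faithfulness, if $F(g)=F(g')$ for $g,g'\in\Hom_\A(a,a')$, then naturality of $\alpha$ gives $\alpha_{a'}\circ g=(E\circ F)(g)\circ\alpha_a=(E\circ F)(g')\circ\alpha_a=\alpha_{a'}\circ g'$, and cancelling the isomorphism $\alpha_{a'}$ yields $g=g'$; the symmetric argument shows $E$ is faithful too. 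For fullness, given $h\in\Hom_\B(Fa,Fa')$ set $g:=\alpha_{a'}^{-1}\circ E(h)\circ\alpha_a$; naturality of $\alpha$ applied to $g$ shows $(E\circ F)(g)=E(h)$, and faithfulness of $E$ then forces $F(g)=h$.

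Next comes the substantive direction. Assume $F$ is fully faithful and essentially surjective. Using essential surjectivity (and the axiom of choice) I would pick, for each $b\in\B_0$, an object $E(b)\in\A_0$ together with an isomorphism $\epsilon_b\colon F(E(b))\to b$. For a morphism $k\colon b\to b'$, full faithfulness of $F$ lets me define $E(k)\colon E(b)\to E(b')$ as the unique arrow with $F(E(k))=\epsilon_{b'}^{-1}\circ k\circ\epsilon_b$. Applying the faithful functor $F$ to each verification, one checks that $E$ preserves identities and composition (so $E$ is a functor) and that the $\epsilon_b$ are the components of a natural isomorphism $\epsilon\colon F\circ E\Rightarrow 1_\B$, naturality being exactly the defining equation for $E(k)$; then $\beta:=\epsilon^{-1}$ does the job. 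For $\alpha$, full faithfulness gives, for each $a$, a unique arrow $\alpha_a\colon a\to E(F(a))$ with $F(\alpha_a)=\epsilon_{Fa}^{-1}$; a one-line computation after applying $F$ yields naturality of $\alpha$, and the auxiliary fact that a fully faithful functor reflects isomorphisms shows each $\alpha_a$ (and hence $\alpha$) is invertible.

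The only place that needs genuine thought is this reverse construction: getting the formula for $E$ on morphisms right so that all coherence squares commute, and observing that every check (functoriality of $E$, naturality of $\alpha$ and of $\beta$) reduces to an equation in $\B$ by applying the faithful $F$, so no independent calculation in $\A$ is ever required. I would isolate as a small lemma the statement that full faithfulness reflects isomorphisms: if $F(g)$ is invertible, write its inverse as $F(g')$ using fullness, and conclude $g'\circ g=1$ and $g\circ g'=1$ using faithfulness; this is what guarantees that $\alpha_a$ and $\beta_b$ are isomorphisms and not merely morphisms. The appeal to the axiom of choice in selecting the $E(b)$ and the $\epsilon_b$ deserves a remark but poses no real difficulty.
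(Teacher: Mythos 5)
Your proof is correct. The paper does not actually prove this proposition; it simply cites \cite[Proposition~7.25]{Awodey}, and your argument is the standard textbook proof (essentially Awodey's): the easy forward direction by reading off the natural-isomorphism squares, and the reverse direction by invoking the axiom of choice to pick $E(b)$ and $\epsilon_b$, defining $E$ on morphisms via full faithfulness so that naturality of $\epsilon$ holds by construction, and using the lemma that a fully faithful functor reflects isomorphisms to get $\alpha_a$ invertible.
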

\begin{proof}
See \cite[Proposition~7.25]{Awodey}
\end{proof}

\begin{example}\label{ex:equiv-of-cats}
  The categories $\M$ of matrices and $\FinVect$ of finite dimensional
  vector spaces are equivalent categories since the functor $\iota:\M
  \to \FinVect$ (q.v.\ Example~\ref{ex:iota}) is fully faithful and
  essentially surjective.  Note that the functor $\iota$ is {\em not}
  surjective on objects.
\end{example}

\section{Densities}\label{app:densities}

In this section we borrow from a manuscript by Guillemin and Sternberg
\cite{GSsemiclassical}.

\subsection{Densities on a vector space}

Consider an $n$-dimensional real vector space $V$.  Recall that a
basis $\{v_1, \ldots, v_n\}$ defines a linear isomorphism
$\mathsf{v}:\R^n \to V$ by $\mathsf{v}(x_1, \ldots, x_n) = \sum x_i
v_i$.  Conversely any linear isomorphism $\mathsf{v}:\R^n \to V$
defines a basis $\{v_1, \ldots, v_n\}$ of $V$ by setting the $i^{th}$ basis
vector $v_i$ to be the image $\mathsf{v}(e_i)$ of the standard
$i^{th}$ basis vector $e_i$ of $\R^n$ under the isomorphism
$\mathsf{v}$.  From now on we will not distinguish between a basis of
$V$ and a linear isomorphism from $\R^n$ to $V$.

\begin{definition}[Frame] We denote the space of bases of an $n$
  dimensional real vector space $V$ by $\Fr(V)$ and refer to it as the
  space of {\sf frames} of $V$.
\end{definition} 

Note that there is a natural {\sf right} action $\bullet$ of the Lie
group $GL(n, \R) \equiv GL(\R^n)$ on the space of frames $\Fr(V)$ of
an $n$-dimensional vector space $V$ by composition on the right:
\[
\mathsf{v}\bullet A : = \mathsf{v}\circ A
\]
for all isomorphisms $\mathsf{v}:\R^n \to V$ and all $A\in GL(n, \R)$.
Moreover this action is {\sf free}  and {\sf transitive}: given
$\mathsf{v}, \mathsf{v}'\in \Fr(V)$ and $A \in GL(n, \R)$
\[
\mathsf{v}\bullet A  =\mathsf{v} \quad \textrm{ if and only if }
A =(\mathsf{v}\inv \circ \mathsf{v}').
\]
\begin{remark}
  A space $X$ with a free and transitive action of a group $G$ is
  called a $G$ {\sf torsor}.
\end{remark}
\noindent
With these preliminaries out of the way we are ready to define
$\alpha$-densities on a vector space.

\begin{definition}[$\alpha$-density] Let $\alpha$ denote a complex
  number.  An {\sf $\alpha$-density} (also called a {\sf density of
    order $\alpha$}) on an $n$-dimensional real vector space $V$ is a
  map
\[
\tau : \Fr (V) \to \C \quad \textrm{ with } \quad  
\tau (\ve\bullet A ) = \tau(\ve)|\det(A)|^\alpha
\]
for all $\ve\in \Fr(V)$ and all $A\in GL(n, \R)$.
\end{definition}
\begin{notation}
  Since $\alpha$-densities on a fixed vector space $V$ are complex
  valued functions, they form a complex vector space.  We denote it by
  $|V|^\alpha$. In other words
\[
|V|^\alpha := \{ \tau :\Fr (V) \to \C \mid \tau (\ve\bullet A ) =
\tau(\ve)|\det(A)|^\alpha \textrm{ for all } \ve\in \Fr(V), A\in GL(n,
\R)\}.
\]
\end{notation}

\begin{remark}
  Alternatively one may view the space of frames $\Fr(V)$ as a
  principal $GL(n,\R)$ bundle over a point.  An $\alpha$-density is
  then a section of the associated bundle $(\Fr(V)\times
  \C)/GL(n,\R)$, where $GL(n,\R)$ acts on $\C$ by the character
  $A\mapsto |\det A|^\alpha$.  Since $(\Fr(V)\times \C)/GL(n,\R)$ is a
  complex line bundle over a point, it is a one dimensional complex
  vector space.  Hence the space of sections of this bundle, i.e., the
  space of densities $|V|^\alpha$, has complex dimension 1. In
  particular we have proved:
\end{remark}

\begin{lemma} The space $|V|^\alpha$ of $\alpha$-densities on a vector
  space $V$ is a complex 1-dimensional vector space.
\end{lemma}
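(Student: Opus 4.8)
The plan is to show directly that $|V|^\alpha$ is one-dimensional by exhibiting an explicit linear isomorphism $|V|^\alpha \cong \C$ once a single frame is fixed. First I would fix a frame $\ve_0 \in \Fr(V)$; this is possible since every finite-dimensional real vector space admits a basis. I claim the evaluation map
\[
\mathrm{ev}_{\ve_0} : |V|^\alpha \to \C, \qquad \tau \mapsto \tau(\ve_0)
\]
is a $\C$-linear isomorphism. Linearity is immediate from the definition of the vector space structure on $|V|^\alpha$ (pointwise operations on $\C$-valued functions), so the content is bijectivity.

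For injectivity, suppose $\tau(\ve_0) = 0$. Any other frame $\ve \in \Fr(V)$ can be written as $\ve = \ve_0 \bullet A$ for a unique $A \in GL(n,\R)$, namely $A = \ve_0\inv \circ \ve$, by the fact (recalled in the excerpt) that the $GL(n,\R)$-action on $\Fr(V)$ is free and transitive. Then the defining transformation law gives $\tau(\ve) = \tau(\ve_0)\,|\det A|^\alpha = 0$, so $\tau$ vanishes on all of $\Fr(V)$, i.e. $\tau = 0$. For surjectivity, given $\lambda \in \C$, define $\tau_\lambda : \Fr(V) \to \C$ by $\tau_\lambda(\ve) := \lambda\,|\det(\ve_0\inv \circ \ve)|^\alpha$. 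One checks $\tau_\lambda(\ve_0) = \lambda$ (since $\ve_0\inv\circ\ve_0 = \mathrm{id}$ has determinant $1$), and for $A \in GL(n,\R)$,
\[
\tau_\lambda(\ve \bullet A) = \lambda\,|\det(\ve_0\inv \circ \ve \circ A)|^\alpha
= \lambda\,|\det(\ve_0\inv\circ \ve)|^\alpha |\det A|^\alpha = \tau_\lambda(\ve)\,|\det A|^\alpha,
\]
using multiplicativity of $\det$ and of $|\cdot|^\alpha$ on positive reals, so $\tau_\lambda \in |V|^\alpha$. Hence $\mathrm{ev}_{\ve_0}$ is onto.

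This establishes $|V|^\alpha \cong \C$ as complex vector spaces, so $\dim_\C |V|^\alpha = 1$. (Alternatively, one could simply invoke the remark immediately preceding the statement, which identifies $|V|^\alpha$ with the space of sections of a complex line bundle over a point, but the direct argument above is self-contained.) I do not expect any serious obstacle here: the only subtlety is making sure the transformation law is used consistently — in particular that $|\det(AB)|^\alpha = |\det A|^\alpha|\det B|^\alpha$, which holds because $t \mapsto t^\alpha$ is multiplicative on $(0,\infty)$ for any complex exponent $\alpha$ — and this is routine.
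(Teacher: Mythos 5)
Your proof is correct, but it takes a different route from the paper's. The paper proves the lemma by way of the remark immediately preceding it: it views $\Fr(V)$ as a principal $GL(n,\R)$-bundle over a point and identifies $|V|^\alpha$ with the space of sections of the associated line bundle $(\Fr(V)\times\C)/GL(n,\R)$ (where $GL(n,\R)$ acts on $\C$ by $A\mapsto|\det A|^\alpha$); since a complex line bundle over a point is just a $1$-dimensional vector space, its space of sections has complex dimension $1$. Your argument is instead a direct, hands-on computation: fix a reference frame $\ve_0$, show evaluation at $\ve_0$ is a $\C$-linear isomorphism onto $\C$ (injective by the transformation law plus transitivity of the $GL(n,\R)$-action, surjective by exhibiting $\tau_\lambda$). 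Both are valid; your version is more elementary and self-contained, requiring no familiarity with associated bundles, whereas the paper's version is deliberately phrased so that it generalizes verbatim to the manifold case in Definition~\ref{def:dens-man}, where the frame bundle is a genuine principal bundle over a base and the density bundle is the associated line bundle. You correctly flag the one small subtlety — multiplicativity of $t\mapsto t^\alpha$ on $(0,\infty)$ for complex $\alpha$ — and you also note the paper's alternative in your parenthetical, so there is nothing missing.
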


\begin{remark}\label{rm:B.8}
  Any {\sf nonzero} $n$-form $\omega \in \Lambda^n(V^*)$ on an
  $n$-dimensional real vector space $V$ defines an $\alpha$-density
  $|\omega|^\alpha$ by the formula
\[
|\omega|^\alpha (v_1, \ldots, v_n) := |\omega(v_1, \ldots,  v_n)|^\alpha
\]
for all frames $\{v_1,\ldots, v_n\}\in \Fr(V)$.  

Conversely, since the space $|V|^\alpha$ of $\alpha$ densities is
1-dimensional, any density $\tau\in |V|^\alpha$ is of the form $\tau =
c |\omega|^\alpha$ for some nonzero $n$-form $\omega\in \Lambda^n
(V^*)$ and a constant $c\in \C$.
\end{remark}

\begin{remark}
  Densities pull back under linear isomorphisms.  If $T:W\to V$ is an
  isomorphism of $n$-dimensional vector spaces and $\tau \in
  |V|^\alpha$ is a density then its pullback $T^*\!\tau:\Fr(W)\to \C$ is
  defined by
\[
T^*\!\tau \,(w_1, \ldots w_n) = \tau (Tw_1, \ldots Tw_n)
\]
for any frame $\mathsf{w} = \{w_1,\ldots, w_n\}\in \Fr(W)$. Note that
since $T$ is an isomorphism the tuple $ (Tw_1, \ldots Tw_n)$ {\em is}
a frame of $V$, so the definition of pullback does make sense.
\end{remark}

\begin{remark}
  Densities can be multiplied: if $\rho \in |V|^\alpha$ and $\tau \in
  |V|^\beta$ are two densities of order $\alpha$ and $\beta$
  respectively then their product defined by 
\[
(\rho \cdot \tau ) (\ve) = \rho (\ve) \tau (\ve)
\]
is easily seen to be a density of order $\alpha +\beta$.  Since the
multiplication map $(\rho, \tau) \mapsto \rho\cdot \tau$ is $\C$ bilinear, we
get a $\C$ linear map
\[
|V|^\alpha \otimes  |V|^\beta \to |V|^{\alpha + \beta}.
\]
Since the map is nonzero, it is an isomorphism of vector spaces by
dimension count.  In particular we have a canonical isomorphism
\[
|V|^{1/2} \otimes  |V|^{1/2} \stackrel{\simeq}{\to} |V|^1.
\]
\end{remark}

\begin{remark}\label{rmk:B.11}
  It makes sense to take a complex conjugate of a density.  If $\rho
  \in |V|^\alpha$ is an $\alpha$-density then $\bar{\rho}:\Fr(V)\to
  \C$ defined by
\[
\bar{\rho} (\ve) := \conj{ \rho (\ve)}
\]
for all $\ve\in \Fr(V)$ is easily seen to be an $\bar{\alpha}$-density.

Therefore  we can define on the space $|V|^{1/2}$ of half-densities a 
$ |V|^1$-valued Hermitian inner product by
\[
(\mu, \tau):= \bar{\mu}\tau.
\]
\end{remark}

\subsection{Densities on manifolds}

Recall that for any real vector bundle $E\to M$ of rank $k$ over a
manifold $M$ we have the principal $GL(k, \R)$ bundle $\Fr (E)\to M$,
the so-called {\sf frame bundle} of $E\to M$.  A typical fiber
$\Fr(E)_q$ of this bundle above a point $q$ of $M$ consists of the
frames of the fiber $E_q$ of the bundle $E$:
\[
\Fr(E)_q := \Fr(E_q).
\]
Recall also that we can think of $\Fr(E)$ as an open subset of the
vector bundle $\Hom (M\times \R^k, E)\to M$; $\Fr(E)$ consists of
isomorphisms.  There is a natural right $GL(k,\R)$ action of $\Fr(E)$
making it a principal $GL(k,\R)$ bundle.

Next recall that given a principal $G$-bundle $G\to P\to M$ over a
manifold $M$ and a (complex) representation $\rho:G\to GL(W)$, we can
build out of this data a complex vector bundle $P\times ^\rho W \to M$
over $M$.  This bundle is the quotient of the trivial bundle $P\times
W$ by a free and proper left action of $G$:
\[
P\times ^\rho W : = (P\times W)/G,\quad g\cdot (p, w):= (pg\inv,
\rho(g)w).
\]
It will be useful to recall that the space of sections of the
associated bundle $\Gamma (P\times ^\rho W)$ is  isomorphic to
the space of equivariant $W$-valued functions on $P$:
\[
\Gamma (P\times ^\rho W) \simeq \{\varphi:P\to W\mid \varphi(p\cdot g)
= \rho (g)\inv \varphi (p)\}.
\]

\begin{definition}[densities on a manifold] \label{def:dens-man}
  We define the complex line bundle $|TM|^\alpha\to M$ of
  $\alpha$-densities on a manifold $M$ to be the associated bundle
\[
|TM|^\alpha := \Fr(TM)\times ^{|\det|^{-\alpha}} \C,
\]
where the representation  $|\det|^{-\alpha}:GL(k,\R)
\to GL(\C)$ is defined by
\[
|\det |^{-\alpha} (A)z := |\det A|^{-\alpha}z \quad \textrm{ for all
}\quad A\in GL(k,\R), z\in \C.
\]
We refer to the sections of the bundle $|TM|^\alpha\to M$ as {\sf
  $\alpha$-densities on the manifold $M$}.  We denote the space of
$\alpha$-densities on a manifold $M$ by $|M|^\alpha$:
\[
|M|^\alpha := \Gamma (\Fr(TM)\times ^{|\det|^{-\alpha}} \C).
\]
\end{definition}

\begin{remark}
  We may and will identify the space of $\alpha$-densities on a
  manifold $M$ with the space of equivariant complex functions on its
  frame bundle:
\[
|M|^\alpha \simeq \{ \tau: \Fr(TM)\to \C\mid \tau (\ve\bullet A) =
|\det A|^\alpha \tau (\ve)\,\, \textrm{ for all } A\in GL(m,\R),
\ve\in \Fr(TM)\}.
\]
\end{remark}
\begin{remark}
  Note that by design the fiber of the bundle $|TM|^\alpha\to M$ at a
  point $q$ is the 1-dimensional complex vector space $|T_q M|^\alpha$
  of $\alpha$-densities on the tangent space $T_q M$.
\end{remark}

\begin{remark} \label{rmk:B.otimes}
  By construction the transition maps for the complex line
  bundle $|TM|^\alpha \to M$ take their values in positive real
  numbers.  It follows that $|TM|^\alpha \to M$ is a trivial bundle
  but not canonically.  In particular its space of sections
  $|M|^\alpha$ is a rank 1 $C^\infty(M, \C)$ module.   Since the space of
sections of the tensor product is isomorphic to the tensor product of 
sections (as $C^\infty(M)$ modules),  it follows that  for
  any complex line bundle $\LL \to M$, a section of $\LL\otimes
  |TM|^\alpha$ is of the form $s\otimes \tau$ for some section $s\in \Gamma
  (\LL)$ of $\LL\to M$ and some  $\alpha$-density $\tau $.
\end{remark}

\begin{remark}
  Since densities on a vector space pull back under a linear
  isomorphism, densities on a manifold pull back under local
  diffeomorphisms: If $F:N\to M$ is a local diffeomorphism between two
  manifolds and $\tau :\Fr(TM)\to \C$ an $\alpha$-density on $M$, the
  pullback $F^*\tau \in |N|^\alpha$ is defined as follows: for any
  point $q\in N$ and any frame $(v_1,\ldots, v_n)$ of $T_q N$
\[
(F^*\tau)_q (v_1,\ldots, v_n) := \tau _{F(q)} (dF_q v_1,\ldots, dF_q v_n).
\]
\end{remark}

\begin{remark}
  For any open subset $U$ of $\R^m$ we have the canonical
  $\alpha$-density $|dx_1\wedge \cdots dx_m|^\alpha$ (q.v.\
  Remark~\ref{rm:B.8}).  Here, of course, $x_1, \ldots, x_m$ are the
  Cartesian coordinates on $\R^m$.  This density defines an
  isomorphism $U\times \C \to |TU|^\alpha$. Therefore, for any
  $\alpha$-density $\tau$ on $U$ there exists a unique function
  $f_\tau \in C^\infty (U)$ with
\[
 \tau = f_\tau |dx_1\wedge \cdots dx_n|^\alpha.
\]
Indeed,
\[
f_\tau = \tau \left(\frac{\partial}{\partial x_1},\ldots,
  \frac{\partial}{\partial x_m}\right).
\]
\end{remark}
Putting the preceding remarks together we note that for a
diffeomorphism $F:U\to V$ of open subsets of $\R^m$ and an
$\alpha$-density $f(y) |dy_1\wedge \ldots \wedge dy_m|^\alpha$ on $V$
we have
\begin{equation} \label{eq:B1}
 F^* \left( f(y) |dy_1\wedge \ldots \wedge dy_m|^\alpha  \right) =
f(F(x))|\det dF_x|^\alpha  |dx_1\wedge \cdots\wedge dx_n|^\alpha.
\end{equation}
Formula \eqref{eq:B1} is a reason why 1-densities can be integrated
over manifolds.  The story parallels the familiar story of integration
of top degree forms on oriented manifolds.  The 1-densities have the
advantage that the manifold doesn't have to be oriented (or even be
orientable) for their integrals to make sense.

The story proceeds as follows. If $U\subset \R^n$ is
an open set and $\tau \in |U|^1$ a 1-density, then, as remarked above,
\[
\tau = f_\tau |dx_1\wedge \ldots\wedge dx_n|
\]
for a unique complex valued smooth function $f_\tau$ on $U$.  One
defines the integral $\int_U \tau $ by
\[
\int_U\tau := \int _U f_\tau.
\]
Or, if you prefer,
\[
\int _U  f \, \, |dx_1 \wedge\ldots\wedge dx_n|: = \int _U f \, \, dx_1\ldots dx_n
\]
for any smooth integrable function $f$ on the set $U$.
This is a complex valued integral.

If $M$ is a manifold of dimension $m$, $\phi 
:U\to \R^m$ a coordinate chart and $\tau$ is a 1-density on $M$ with support
in $U$ one defines
\[
\int _M \tau := \int _{\phi(U)} (\phi\inv)^* \tau.
\]
If $\psi: U\to \R^m$ is another coordinate chart, then
\[
(\psi \inv)^*\tau = f(y) \,|dy_1\wedge \cdots\wedge dy_m|
\]
for some $f\in C^\infty (\psi(U))$.  Consider the diffeomorphism $F =
\psi \circ \phi\inv : \phi(U) \to \psi(U)$.  We have
\[
(\phi\inv)^*\tau =(\psi\inv \circ \psi \phi\inv)^*\tau = F^*(\psi\inv)^*\tau.
\]
By \eqref{eq:B1}
\[
(\phi\inv)^*\tau = f(F(x)) |\det DF_x| |dx_1\wedge\cdots\wedge dx_m|. 
\]
Therefore
\[
\int _{\phi (U) }(\phi\inv)^*\tau = \int _{\phi (U) } f(F(x))\, |\det
DF_x|\, dx_1\cdots dx_m = \int_{F(\phi(U))=\psi (U)} f(y)\, dy_1\cdots
dy_m = \int_{\psi(U)} (\psi\inv)^*\tau,.
\]
where the second equality holds by the change of variables formula for
functions on regions of $\R^m$.  Therefore integrals of 1-densities
supported in coordinate charts are well-defined.

Next one makes sense of integrability of {\sf non-negative} densities.
A 1-density $\tau$ on a manifold $M$ is {\sf non-negative} if its
value at any frame $\{v_1, \ldots v_m\} \subset T_q M$ is a
non-negative real number.  It is not hard to see that
``non-negativity'' is well-defined.  Now choose a locally finite cover
$\{U_\alpha\}$ of $M$ by coordinate charts and choose a partition of
unity $\{\rho_\alpha\}$ subordinate to the cover.  We say that a
non-negative 1-density is integrable if the sum
\[
\sum _\alpha \int _M \rho_\alpha \tau =\sum _\alpha \int _{U_\alpha}
\rho_\alpha \tau
\]
converges.  We then define $\int_M \tau$ to be the sum:
\[
\int_M \tau := \sum _\alpha \int _{U_\alpha}
\rho_\alpha \tau.
\]
The rest of the definition of integration of 1-densities proceeds just
like for functions.  A given real-valued 1-density $\tau$ can be
written as a difference of two (continuous) non-negative 1-densities
$\tau_+$ and $\tau_-$:
\[
\tau = \tau_+ - \tau_-.
\] 
We call $\tau$ integrable if $\int_M \tau_+$ and $\int_M\tau_-$ are
finite.  We then set $\int_M\tau = \int_M \tau_+ - \int_M\tau_-$.
Finally we define a complex valued 1-density $\tau$ integrable if its
real and imaginary parts are integrable; we define its integral to be
the sum
\[
\int _M\tau = \int_M Re(\tau) + \sqrt{-1}\int_M Im(\tau).
\]

\subsection{The ``Intrinsic'' Hilbert space} \label{subsec:intrHilb}

Suppose $(\LL\to M, \langle
\cdot, \cdot\rangle)$ is a Hermitian line bundle.  Then given a
1-density $\tau$ on $M$ we can define a Hermitian pairing pairing of
sections of $\LL$ by
\[
\langle\langle s, s'\rangle\rangle := \int _M \langle s, s'\rangle \tau.
\]
There is an associated Hilbert space of ``square integrable''
sections, which, of course, depends on the choice of our density
$\tau$.  There is also a more intrinsic pairing of sections of a
slightly different bundle.  Consider the tensor product of complex
line bundles $\LL\otimes |M|^{1/2} \to M$. By
Remark~\ref{rmk:B.otimes} a section of $\LL\otimes |M|^{1/2} \to M$ is
of the form $s\otimes \mu$, where $s\in \Gamma (\LL)$ and $\mu$ is a
$\frac{1}{2}$-density.  Now given two sections $s_1\otimes \mu_1$ and
$s_2\otimes\mu_2$ we can pair them to get a 1-density $\langle s_1,
s_2\rangle \bar{\mu}_1\mu_2$ (q.v.\ Remark~\ref{rmk:B.11}).
  Hence the Hermitian inner product
\[
\Gamma(\LL\otimes |M|^{1/2})\times  \Gamma(\LL\otimes |M|^{1/2}) \to \R, 
\quad \langle\langle s_1\otimes \mu_1, s_2\otimes\mu_2 \rangle\rangle := 
\int _M \langle s_1,
s_2\rangle \bar{\mu}_1\mu_2
\]
makes sense (whenever the integral converges).  It is easy to see that
the integral above does converge for all sections in the space
\[
L^2 (\LL\otimes |M|^{1/2}, M)\cap \Gamma(\LL\otimes |M|^{1/2}) := \left\{s\otimes \mu \in
\Gamma(\LL\otimes |M|^{1/2})\,\,\right |\left.  \int _M \langle s,s\rangle\,
|\mu|^2 < \infty\right\}.
\]
The completion of the space with respect to the Hermitian inner product gives us the ``intrinsic Hilbert space of square-integrable sections'' $L^2 (\LL\otimes |M|^{1/2}, M)$.


\begin{thebibliography}{XXXX}

\bibitem{AE} S.T.\ Ali and M.\ Englis, Quantization methods: a guide
  for physicists and analysts, {\em Rev. Math. Phys.} {\bf 17}(2005),
  no. 4, 391--490.
\bibitem{Awodey}
Steve Awodey.
\newblock {\em Category theory}, volume~49 of {\em Oxford Logic Guides}.
\newblock The Clarendon Press Oxford University Press, New York, 2006.


\bibitem{BatesWeinstein} S.\ Bates and A.\ Weinstein, {\em Lectures on the geometry of quantization}, Berkeley Mathematics Lecture Notes, 8. American Mathematical Society, Providence, RI; Berkeley Center for Pure and Applied Mathematics, Berkeley, CA, 1997. vi+137 pp. ISBN: 0-8218-0798-6 

\bibitem{Duis} J.J.\
Duistermaat,
On global action-angle coordinates.
{\em Comm.\ Pure Appl.\ Math.} {\bf 33} (1980), no. 6, 687--706.

\bibitem{EMRV} A. Echeverria-Enriquez, M.C.\ Munoz-Lecanda, N.\
  Roman-Roy and C.\ Victoria-Monge, Mathematical foundations of
  geometric quantization, {\em Extracta Math.} {\bf 13}(1998),
  135--238.

\bibitem{GotayNoP} M.~Gotay, A class of nonpolarizable symplectic
  manifolds, {\em Monatsh. Math.}, {\bf 103} (1987) 27--30;
{\tt doi 10.1007/BF01302708}.

\bibitem{GS_Assymptotics} V.\ Guillemin and S.\ Sternberg, {\em
      Geometric Asymptotics}, Mathematical Surveys, No. 14. American
    Mathematical Society, Providence, R.I., 1977. xviii+474 pp.

  \bibitem{GSsemiclassical} V.\ Guillemin and S.\ Sternberg, {\em
      Semi-classical analysis},\\
    \url{http://www-math.mit.edu/~vwg/semiclassGuilleminSternberg.pdf},
accessed on 1/09/2012.

\bibitem{HS}
   M.J.~Hopkins and I.M.~Singer,
  Quadratic functions in geometry, topology, and M-theory,
   {\em J.\ Differential Geom.},
   {\bf 70} ({2005}),{329--452}.

 \bibitem{GGK} V.\ Guillemin, V.\ Ginzburg and Y.\ Karshon, {\em
     Moment maps, cobordisms, and Hamiltonian group actions}, Appendix
   J by Maxim Braverman. Mathematical Surveys and Monographs, 98.
   American Mathematical Society, Providence, RI, 2002. viii+350 pp.
   ISBN: 0-8218-0502-9

\bibitem{LerMalk} E.\ Lerman and A.\ Malkin, Differential characters as stacks and prequantization, {\em 
J.\ Gokova Geom.\ Topol.\ GGT} {\bf 2} (2008), 14--39.
 
\bibitem{Sniatycki} J.\ Sniatycki, {\em Geometric quantization and
    quantum mechanics}, Applied Mathematical Sciences, 30.
  Springer-Verlag, New York-Berlin, 1980. ix+230 pp. ISBN:
  0-387-90469-7

\bibitem{SimmsWoodhouse} D.J.\ Simms and N.M.J.\ Woodhouse, {\em
    Lectures in geometric quantization}, Lecture Notes in Physics, 53.
  Springer-Verlag, Berlin-New York, 1976. iii+166 pp. ISBN:
  3-540-07860-6
\bibitem{Souriau} J.-M. Souriau,{\em Structure of dynamical systems. A
    symplectic view of physics.} Progress in Mathematics, 149.
  Birkh\"auser Boston, Inc., Boston, MA, 1997. xxxiv+406 pp. ISBN:
  0-8176-3695-1 .

\bibitem{Wallach} N.R.\ Wallach, {\em Symplectic geometry and Fourier
    analysis},  with an appendix on quantum mechanics by Robert
  Hermann. Lie Groups: History, Frontiers and Applications, Vol.~V.
  Math Sci Press, Brookline, Mass., 1977. xvii+436 pp.
 
\bibitem{Woodhouse} N.M.J.\ Woodhouse, {\em Geometric quantization},
  Second edition. Oxford Mathematical Monographs. Oxford Science
  Publications. The Clarendon Press, Oxford University Press, New
  York, 1992. xii+307 pp. ISBN: 0-19-853673-9


\end{thebibliography}
\end{document}